\newcommand{\E}{\mathbb{E}}
\newcommand{\R}{\mathbb{R}}
\newcommand{\C}{\mathbb{C}}
\newcommand{\Z}{\mathbb{Z}}
\newcommand{\tr}{\text{tr}}
\newcommand{\A}{\mathcal{A}}
\newcommand{\state}{\varphi}
\newcommand{\Ginv}{G^{\langle -1 \rangle}}
\def\bone{\mathbf{1}}
\def\cir{\mathrm{circ}}
\def\semicirc{\mathrm{sc}}
\def\cov{\mathrm{cov}}
\def\bi{\mathbf{i}}
\def\P{\mathcal{P}}
\newtheorem{prop}{Proposition}[section]
\theoremstyle{plain}
\newtheorem{theorem}{Theorem}[section]
\theoremstyle{plain}
\newtheorem{lemma}{Lemma}[section]
\theoremstyle{plain}
\newtheorem{cor}{Corollary}[section]
\theoremstyle{plain}
\newtheorem{definition}{Definition}[section]
\theoremstyle{plain}
\newtheorem{assumption}{Assumption}[section]
\theoremstyle{plain}
\newtheorem{conj}{Conjecture}[section]
\theoremstyle{plain}
\theoremstyle{remark}
\newtheorem{remark}{Remark}[section]
\theoremstyle{remark}
\newenvironment{assumption-alt}[1]
  {%
   \begin{assumption}}
  {\end{assumption}}
\title[On $*$-Convergence of Schur-Hadamard Products]{On $*$-Convergence of Schur-Hadamard Products of Independent Nonsymmetric Random Matrices}
\author[S. S. Mukherjee]{Soumendu Sundar Mukherjee}
\address{Interdisciplinary Statistical Research Unit, Indian Statistical Institute, 203 B. T.~Road, Kolkata 700108, India}
\address{Department of Mathematics, National University of Singapore, 10 Lower Kent Ridge Road, Singapore 119076}
\email{soumendu041@gmail.com}
\thanks{The author is supported by an INSPIRE Faculty Fellowship from the Department of Science and Technology, Government of India.}
\keywords{Free probability; $*$-distribution; circular variable; non-crossing partitions; circular law; limiting spectral measure; Brown measure}
\subjclass[2020]{46L54, 60B20}
\begin{document}
\begin{abstract}
    Let $\{x_{\alpha}\}_{\alpha \in \Z}$ and $\{y_{\alpha}\}_{\alpha \in \Z}$ be two independent collections of zero mean, unit variance random variables with uniformly bounded moments of all orders. Consider a nonsymmetric Toeplitz matrix $X_n = ((x_{i - j}))_{1 \le i, j \le n}$ and a Hankel matrix $Y_n = ((y_{i + j}))_{1 \le i, j \le n}$, and let $M_n = X_n \odot Y_n$ be their elementwise/Schur-Hadamard product. In this article, we show that almost surely, $n^{-1/2}M_n$, as an element of the $*$-probability space $(\mathcal{M}_n(\C), \frac{1}{n}\tr)$, converges in $*$-distribution to a circular variable. With i.i.d. Rademacher entries, this construction gives a matrix model for circular variables with only $O(n)$ bits of randomness. We also consider a dependent setup where $\{x_{\alpha}\}$ and $\{y_{\beta}\}$ are independent strongly multiplicative systems (\`{a} la Gaposhkin \cite{gaposhkin1969central}) satisfying an additional \emph{admissibility} condition, and have uniformly bounded moments of all orders---a nontrivial example of such a system being $\{\sqrt{2}\sin(2^n \pi U)\}_{n \in \Z_+}$, where $U \sim \mathrm{Uniform}(0, 1)$. In this case, we show in-expectation and in-probability convergence of the $*$-moments of $n^{-1/2}M_n$ to those of a circular variable. Finally, we generalise our results to Schur-Hadamard products of structured random matrices of the form $X_n = ((x_{L_X(i, j)}))_{1 \le i, j \le n}$ and $Y_n = ((y_{L_Y(i, j)}))_{1 \le i, j \le n}$, under certain assumptions on the \emph{link-functions} $L_X$ and $L_Y$, most notably the injectivity of the map $(i, j) \mapsto (L_X(i, j), L_Y(i, j))$. Based on numerical evidence, we conjecture that the circular law $\mu_{\cir}$, i.e. the uniform measure on the unit disk of $\C$, which is also the Brown measure of a circular variable, is in fact the limiting spectral measure (LSM) of $n^{-1/2}M_n$. If true, this would furnish an interesting example where a random matrix with only $O(n)$ bits of randomness has the circular law as its LSM.
\end{abstract}

\maketitle

%%%%%%%%%%%%%%%%%%%%%%%%%%%%%%%%%%%%%%%%%%%%%%%%%%
%% SECTION 1
%%%%%%%%%%%%%%%%%%%%%%%%%%%%%%%%%%%%%%%%%%%%%%%%%%
\section{Introduction}
Let $A$ be an $n \times n$ complex matrix with eigenvalues $\lambda_1, \ldots, \lambda_n$. The \emph{empirical spectral measure} (ESM) of $A$ is the probability measure given by
\begin{equation}
    \mu_A = \frac{1}{n}\sum_{i = 1}^n \delta_{\lambda_i},
\end{equation}
where $\delta_x$ is the Dirac measure at $x$. The circular law $\mu_{\mathrm{circ}}$ is the uniform measure on the unit disk of $\C$. Consider IID matrices $A_n = ((a_{ij}))$, where $a_{ij}$ are i.i.d. zero mean unit variance complex random variables. The famous circular law theorem \cite{tao2010random} states that the ESMs $\mu_{n^{-1/2}A_n}$ converge weakly almost surely to $\mu_{\cir}$. Weak limits of ESMs are called \emph{limiting spectral measures} (LSMs).

A patterned/structured random matrix is a matrix $A$ whose $(i, j)$-th entry is given by $a_{L(i, j)}$, where $L : \Z_+^2 \rightarrow \Z^d$ is a \emph{link-function} which dictates the pattern, and $\{a_\alpha\}_{\alpha \in \Z^d}$ is a collection of random variables with zero mean, unit variance. The random variables $a_{\alpha}$ are typically assumed to be i.i.d. for showing almost sure weak convergence of ESMs. A few notable examples of patterned matrices are given in Table~\ref{table:srm}.

\begin{table}[!ht]
    \centering
    \caption{Some common structured random matrices.}
    \label{table:srm}
    \begin{tabular}{lll} \hline
        Matrix                 & Link function            & $d$ \\ \hline
        IID                    & $(i, j)$                 & $2$ \\
        Wigner                 & $(i \wedge j, i \vee j)$ & $2$ \\
        nonsymmetric Toeplitz & $i - j$                  & $1$ \\
        Hankel                 & $i + j$                  & $1$ \\ \hline
    \end{tabular}
\end{table}

Existence of almost sure weak limits of the ESMs of various self-adjoint patterned random matrices is known. For example, the Wigner matrix has the \emph{semi-circular} law $\mu_{\semicirc}$ as its LSM, which is a probability measure on $\R$ with density
\begin{equation}
    \mu_{\semicirc}(dt) = \frac{1}{2\pi}\sqrt{4 - t^2} \, \bone_{\{|t| \le 2\}}\, dt.
\end{equation}
Symmetric Toeplitz ($L(i, j) = |i - j|$) and Hankel matrices also have LSMs \cite{bryc2006spectral, hammond2005distribution}; however, their densities are not known explicitly. For a unified treatment of symmetric patterned random matrices, see \cite{bose2018patterned}.

On the other hand, it is not known if nonsymmetric Toeplitz matrices have LSMs. In fact, besides IID matrices and their variants which only require independence of the entries but not identical distribution, weak limits are known for only a handful of models in the nonsymmetric/non-self-adjoint case (see, e.g., \cite{meckes2009some, guionnet2011single, nguyen2015elliptic, basak2018circular}).

In this article, we consider elementwise/Schur-Hadamard products (denoted by the symbol $\odot$) of independent nonsymmetric patterned random matrices. Let $L_X : \Z_+^2 \rightarrow \Z^d$ and $L_Y : \Z_+^2 \rightarrow \Z^{d'}$ be two different link-functions and $\{x_\alpha\}_{\alpha \in \Z^d}$ and $\{y_{\beta}\}_{\beta \in \Z^{d'}}$ two independent collections of random variables with zero mean and unit variance. Let
\[
    X_n = ((x_{L_X(i, j)}))_{1 \le i, j \le n} \text{ and } Y_n = ((y_{L_Y(i, j)}))_{1\le i, j \le n}.
\]
Consider the Schur-Hadamard product of $X_n$ and $Y_n$:
\begin{equation}
   M_n := X_n \odot Y_n = ((X_{n, i, j} Y_{n, i, j})) = ((x_{L_X(i, j)} y_{L_Y(i, j)})).
\end{equation}
Note that by our assumptions on $x_{\alpha}$ and $y_{\alpha}$, the entries of $M_n$ remain zero mean, unit variance.

Schur-Hadamard products of symmetric patterned random matrices were considered in \cite{bose2014bulk} and a main result of that paper says that if $X_n$ is symmetric Toeplitz and $Y_n$ is Hankel, then the LSM of $n^{-1/2} M_n$ exists and is in fact the semi-circular law $\mu_{\semicirc}$.

In this article, we consider the Schur-Hadamard product of nonsymmetric Toeplitz and Hankel matrices, i.e. we take $L_X(i, j) = i - j$ and $L_Y(i, j) = i + j$. Simulations shown in Figure~\ref{fig:toephank} prompt us to make the following conjecture.
 
\begin{conj}\label{conj1}
    Let $L_X(i, j) = i - j$ and $L_Y(i, j) = i + j$. Suppose that $\{x_{\alpha}\}_{\alpha \in \Z}$ and $\{ y_{\alpha}\}_{\alpha \in \Z}$ are independent collections of i.i.d. random variables with zero mean and unit variance. Then the ESM of $n^{-1/2}M_n$ converges weakly almost surely to the circular law $\mu_{\mathrm{circ}}$.
\end{conj}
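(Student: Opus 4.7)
The strategy is Girko's Hermitization: to establish ESM convergence of the non-Hermitian matrix $n^{-1/2}M_n$, one reduces the problem to the study of the family of Hermitized matrices $(n^{-1/2}M_n - zI)(n^{-1/2}M_n - zI)^*$ for $z \in \C$. Concretely, if $\nu_{n, z}$ denotes the empirical singular value distribution of $n^{-1/2}M_n - zI$, one wishes to show that for Lebesgue-almost every $z \in \C$,
\[
U_n(z) := \int_0^\infty \log s \, d\nu_{n, z}(s) \longrightarrow U_{\cir}(z) := \int_{\C} \log |z - w| \, d\mu_{\cir}(w)
\]
almost surely, and then, combined with tightness of the ESMs, invoke the standard potential-theoretic argument to conclude the conjecture.

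The $*$-convergence of $n^{-1/2}M_n$ to a circular element $c$ already established in this paper immediately yields, for every fixed $z$, the weak convergence of $\nu_{n, z}$ to the distribution of the singular values of $c - z$. Since the Brown measure of a circular element is $\mu_{\cir}$, the limiting logarithmic potential is exactly $U_{\cir}(z)$. Thus the weak-convergence ingredient of Girko's method is handed to us for free.

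The genuine difficulty, and in my view the unavoidable obstacle, is to upgrade this weak convergence to convergence of $\int \log s \, d\nu_{n, z}(s)$. This requires uniform integrability of $\log s$ against $\nu_{n, z}$ both at $s = \infty$ and, much more seriously, at $s = 0$. Control at infinity is routine via moment bounds on the Frobenius norm of $M_n$, which follow from the bounded moment assumptions on $x_\alpha$ and $y_\alpha$. Control at $s = 0$ requires a quantitative lower bound on the smallest singular value of $n^{-1/2}M_n - zI$, e.g., a polynomial estimate of the form $\mathbb{P}(s_{\min}(n^{-1/2}M_n - zI) \le n^{-A}) = o(1)$ uniformly in $z$ on compact subsets of $\C$ avoiding the origin. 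Such smallest-singular-value bounds are the technical heart of all rigorous circular-law theorems.

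To attack this bound I would attempt a Rudelson--Vershynin style compressible/incompressible vector dichotomy, adapted to the present structured setting. Conditioning on the collection $\{y_{\alpha}\}$, each column of $M_n - z\sqrt{n}\,I$ becomes a linear functional of $\{x_{\alpha}\}$ whose coefficients are the corresponding $y_\alpha$'s with a $z$-dependent shift; the injectivity of $(i, j) \mapsto (L_X(i, j), L_Y(i, j))$ and the linear independence of $L_X$ and $L_Y$ should ensure that these coefficient vectors are sufficiently generic, with high probability, for Littlewood--Offord type anti-concentration to apply, with a symmetric argument conditioning on $\{x_\alpha\}$ playing a complementary role. An alternative route is the polynomial-potential approach of Tao--Vu and Bordenave--Caputo--Chafa\"i, which may be better suited to the $O(n)$-randomness regime. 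Either way, controlling small singular values of Schur--Hadamard products of structured matrices with only $O(n)$ bits of randomness appears to be a genuinely novel challenge, and I expect substantial new technical ideas to be required to close it out.
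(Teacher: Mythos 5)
You have not proved this statement, and neither does the paper: it is stated there as a conjecture precisely because the step you identify as ``the genuine difficulty'' is open. Your roadmap is the standard and correct one --- Girko's Hermitization, with the paper's $*$-convergence result supplying, for each fixed $z$, the weak convergence of the singular value distribution of $n^{-1/2}M_n - zI$ to that of $c - z$, and the Brown measure of the circular element supplying the target potential $U_{\cir}(z)$. But the passage from weak convergence of $\nu_{n,z}$ to convergence of $\int \log s\, d\nu_{n,z}(s)$ is exactly where the Brown measure fails to be continuous with respect to $*$-moment convergence (the nilpotent Jordan block converging to a Haar unitary is the classic counterexample), so the least singular value bound is not a technicality but the entire remaining content of the conjecture. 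You name plausible attack routes (Rudelson--Vershynin compressible/incompressible dichotomy after conditioning on one input sequence, or the Tao--Vu / Bordenave--Caputo--Chafa\"i replacement approach), but you do not carry either out, and you candidly say that substantial new ideas are needed. That assessment is accurate: with only $O(n)$ bits of randomness the rows of $M_n$ are strongly dependent, so the usual anti-concentration arguments, which condition on all but one independent row or column, do not apply off the shelf.

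Two smaller points you should also be aware of if you pursue this. First, the paper's state is $\frac{1}{n}\E\tr$, so the $*$-convergence result is a statement about expected traces; to get \emph{almost sure} weak convergence of $\nu_{n,z}$ (which your almost-sure conclusion requires) you need an additional concentration/variance estimate for $\frac{1}{n}\tr$ of words in $M_n, M_n^*$, which is standard but must be checked in this dependent setting. Second, your proposed uniform bound $\mathbb{P}(s_{\min}(n^{-1/2}M_n - zI) \le n^{-A}) = o(1)$ should hold for almost every fixed $z$ (including near the origin, not ``avoiding the origin''); what Girko's method needs is a bound for Lebesgue-a.e.\ $z$, together with uniform integrability of $\log$ at $0$ against $\nu_{n,z}$, which typically also requires control of the number of small (not just the smallest) singular values. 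In summary: the approach is the right one and the diagnosis of the obstruction is correct, but the statement remains unproved.
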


In fact, one of our main motivations for studying the Schur-Hadamard product model is that Conjecture~\ref{conj1}, if true, would provide an interesting example of a random matrix model with only $O(n)$ bits of randomness with $\mu_{\mathrm{circ}}$ as the LSM. More elaborately, one can take the entries to be independent Rademacher variables, i.e. random signs, as in Figure~\ref{fig:toephank}-(b). All the standard models with $\mu_{\mathrm{circ}}$ as LSM possess $\Omega(n^2)$ bits of randomness (an exception is the recent paper \cite{basak2018circular}, where the authors require $\Omega(n\log^{12}(n))$ bits of randomness).

In this article, we consider the problem from a free-probabilistic perspective. We show in Theorem~\ref{thm:toephank} that $n^{-1/2}M_n$, as an element of the $*$-probability space $(\mathcal{M}_n(L^{\infty, -}(\Omega, \mathbb{P})), \frac{1}{n}\E\tr)$ converges in $*$-distribution to a circular variable. We originally showed this for input random variables which are independent with zero mean, unit variance, and have uniformly bounded moments of all orders. One of the anonymous referees pointed out that the classical central limit theorem holds for a strongly multiplicative system (SMS)\footnote{Multiplicative systems of random variables were first studied by \cite{alexits1961convergence}. For a central limit theorem for SMSs, see, e.g., \cite{gaposhkin1969central}.} of random variables (see Definition~\ref{defn:sms}), and raised the question if we can allow such entries. 

In Theorem~\ref{thm:toephank}, we allow the input random variables $\{x_\alpha\}$ and $\{y_\beta\}$ to be independent SMSs satisfying a certain admissibility condition (see Definition~\ref{defn:asms}) and having uniformly bounded moments of all orders. Independent random variables with zero mean, unit variance, and uniformly bounded moments of all orders are trivial examples of such systems. A nontrivial example is the collection $\{\sqrt{2}\sin(2^n \pi U)\}_{n \in \Z_+}$, where $U \sim \mathrm{Uniform}(0, 1)$. Simulations suggest that the circular law limit is possibly true even under such dependence. See Figure~\ref{fig:toephank}-(c).

When the entries are independent random variables with zero mean, unit variance, and uniformly bounded moments of all orders, we further show that almost surely, $n^{-1/2}M_n$, as an element of the $*$-probability space $(\mathcal{M}_n(\C), \frac{1}{n}\tr)$, converges in $*$-distribution to a circular variable (see Theorem~\ref{thm:asconv}-(a)). When the entries form admissible SMSs with uniformly bounded moments of all orders, we are only able to establish an in-probability version (see Theorem~\ref{thm:asconv}-(b)). 
 
As a direct corollary to our results, we recover the aforementioned result of \cite{bose2014bulk} that the LSM of the $n^{-1/2}$-scaled Schur-Hadamard product of symmetric Toeplitz and Hankel matrices is the semi-circular law $\mu_{\semicirc}$ (see Corollary~\ref{cor:bm14}).

It is well-known that circular variables are $R$-diagonal, and they have $\mu_{\mathrm{circ}}$ as their Brown measure (see, e.g., Chapter 11 of \cite{Mingo2017}). In this context, let us recall that Voiculescu \cite{voiculescu1991limit} showed that matrices from the Ginibre ensemble (which are essentially IID matrices with Gaussian entries) converge in $*$-distribution to a circular variable, thus constructing the first matrix model for circular variables. The Schur-Hadamard product construction also gives a matrix model for circular variables, albeit with only $O(n)$ bits of randomness. 

\begin{figure}[!ht]
\centering
\begin{tabular}{ccc}
    \includegraphics[scale=0.38]{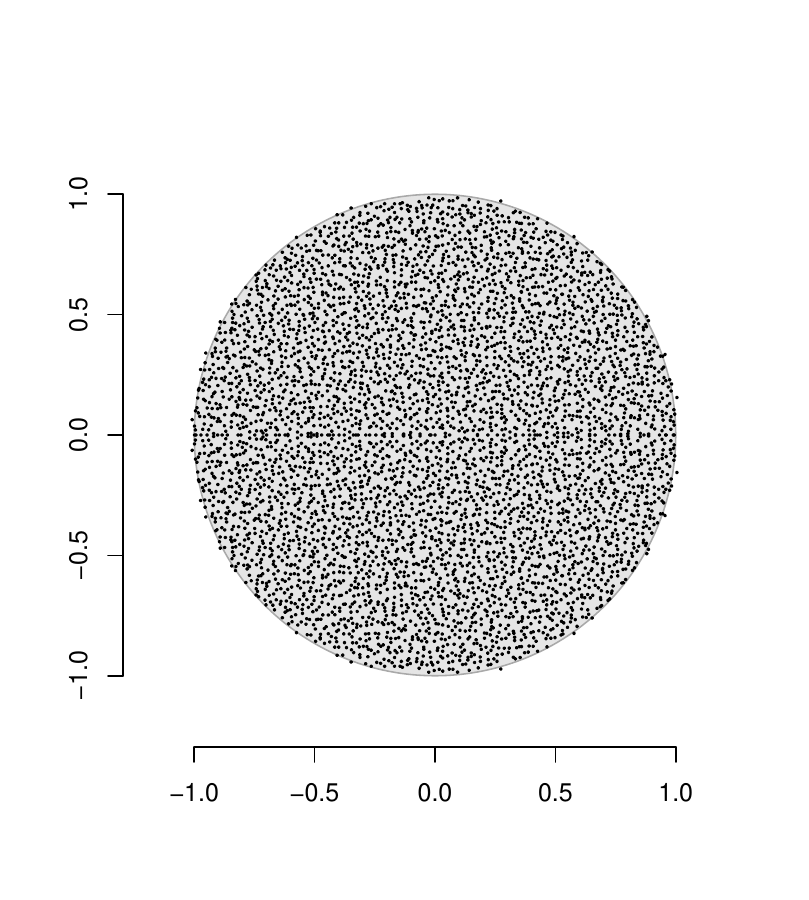} & \includegraphics[scale=0.38]{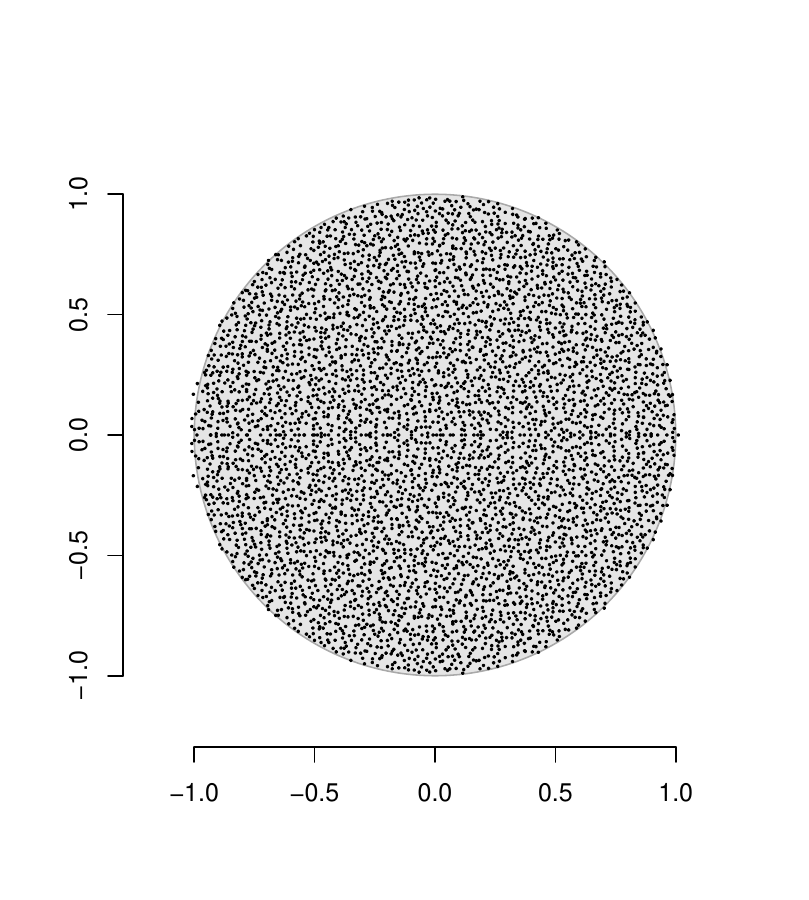} & \includegraphics[scale=0.38]{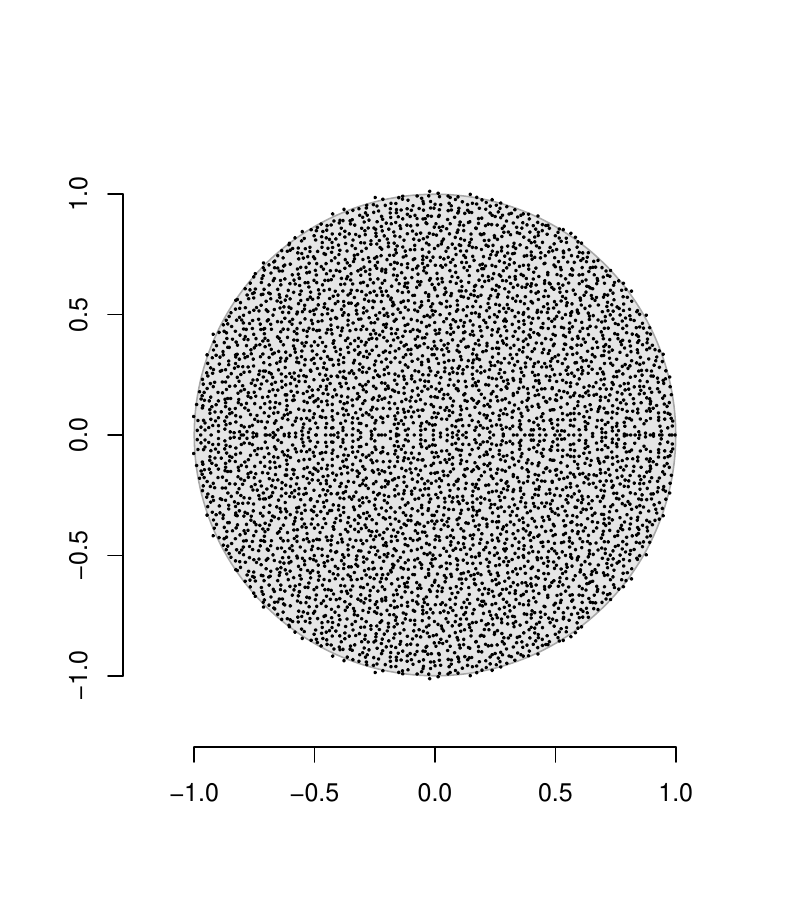}\\
    (a) & (b) & (c)
\end{tabular}
\caption{Spectrum of one realisation of $n^{-1/2}M_n$ for $n = 5000$ with $L_X(i, j) = i - j$, $L_Y(i, j) = i + j$. The component matrices are independent with each having (a) independent standard Gaussian entries, (b) independent Rademacher entries (i.e. random signs), and (c) entries from a copy of the SMS $\{\sqrt{2}\sin(2^n \pi U) : n \in \Z_+\}$, where $U \sim \mathrm{Uniform}(0, 1)$.}
\label{fig:toephank}
\end{figure}

Another main result of \cite{bose2014bulk} is that if $L_X$ and $L_Y$ determine the Wigner link-function $L_W(i, j) = (i\wedge j, i\vee j)$ in the sense that
\begin{equation}\label{eq:BM-cond}
    (L_X(i, j), L_Y(i, j)) = (L_X(i', j'), L_Y(i', j')) \implies L_W(i, j) = L_W(i', j') \text{ for all } i, j, i', j' \in \Z_+, 
\end{equation}
and some other regularity conditions hold on $L_X, L_Y$, then the LSM of $n^{-1/2}M_n$ is the semi-circular law $\mu_{\semicirc}$. Note that \eqref{eq:BM-cond} is equivalent to saying that $L_W = g \circ G$ for some map $g: \Z^d \times \Z^{d'} \rightarrow \Z_+^2$, where $G$ is the map $(i, j) \mapsto (L_X(i, j), L_Y(i, j))$. The symmetric Toeplitz and Hankel link-functions satisfy this property.

A similar result holds in the nonsymmetric case. We establish in Theorem~\ref{thm:gen} that if the map
\begin{equation*}
    (i, j) \mapsto (L_X(i, j), L_Y(i, j))
\end{equation*}
is injective, plus some regularity assumptions on $L_X$ and $L_Y$ hold, then $n^{-1/2}M_n$ converges in $*$-distribution to a circular variable. This motivates us to make the following generalisation of Conjecture~\ref{conj1}.

\begin{conj}\label{conj2}
    Suppose that Assumptions~\ref{assm:propB} and \ref{assm:genlink2} hold. Also, suppose that $\{x_{\alpha}\}_{\alpha \in \Z^d}$ and $\{ y_{\alpha}\}_{\alpha \in \Z^{d'}}$ are independent collections of i.i.d. random variables with zero mean and unit variance. Then the ESM of $n^{-1/2}M_n$ converges weakly almost surely to the circular law $\mu_{\mathrm{circ}}$.
\end{conj}

\begin{remark}
The condition that the map $(i, j) \mapsto (L_X(i, j), L_Y(i, j))$ is injective implies that the entries of $M_n$ are uncorrelated. Indeed, because of our assumptions on the input random variables, we have
\begin{align} \label{eq:cov} \nonumber
    \cov(M_{n, i, j}, M_{n, i', j'}) &= \E M_{n, i, j} M_{n, i', j'} \\ \nonumber
                                     &= \E x_{L_X(i, j)} x_{L_X(i', j')} \E y_{L_Y(i, j)} y_{L_Y(i', j')} \\
                                     &= \cov(x_{L_X(i, j)}, x_{L_X(i', j')}) \cov(y_{L_Y(i, j)}, y_{L_Y(i', j')}).
\end{align}
Now if $(i, j) \ne (i', j')$, then either $L_X(i, j) \ne L_X(i', j')$, or $L_Y(i, j) \ne L_Y(i', j')$. Thus one of the covariances in \eqref{eq:cov} must vanish.

However, just being uncorrelated is not enough as the simulations of Figure~\ref{fig:badXY} suggest. There we take $L_X(i, j) = i + j$ and $L_Y(i, j) = j$ which makes the map $(i, j) \mapsto (L_X(i, j), L_Y(i, j))$ injective. We will impose a regularity condition that would disallow link-functions like $L_Y(i, j) = j$ which make an input random variable appear too many times in a column or row (see Assumption~\ref{assm:propB}).

\begin{figure}[!ht]
\centering
\begin{tabular}{ccc}
    \includegraphics[scale=0.38]{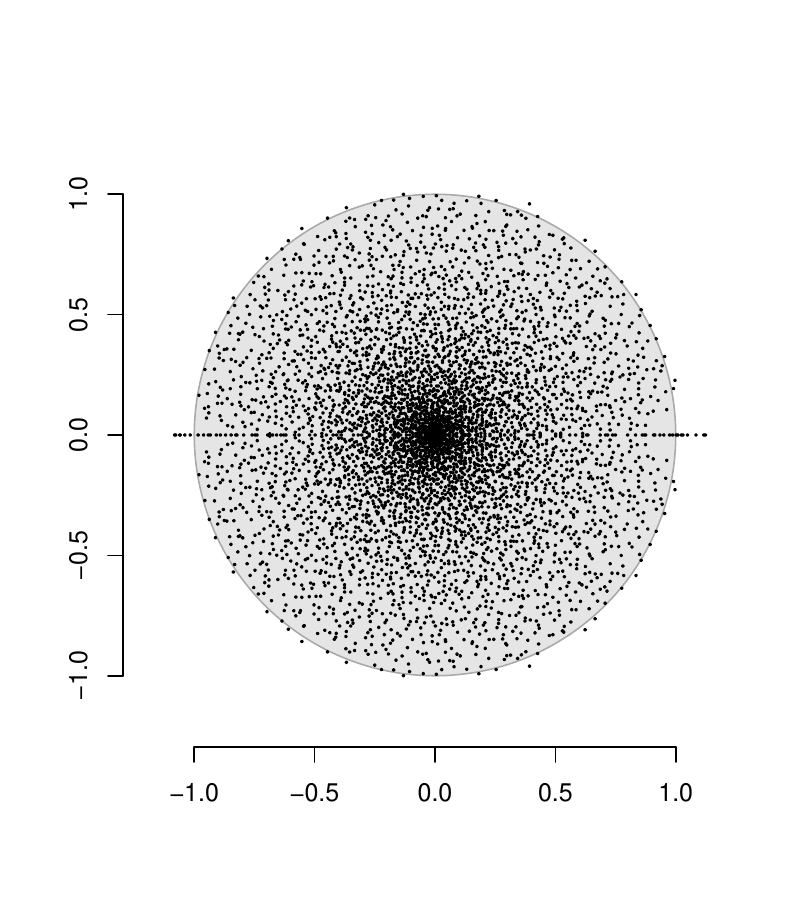} & \includegraphics[scale=0.38]{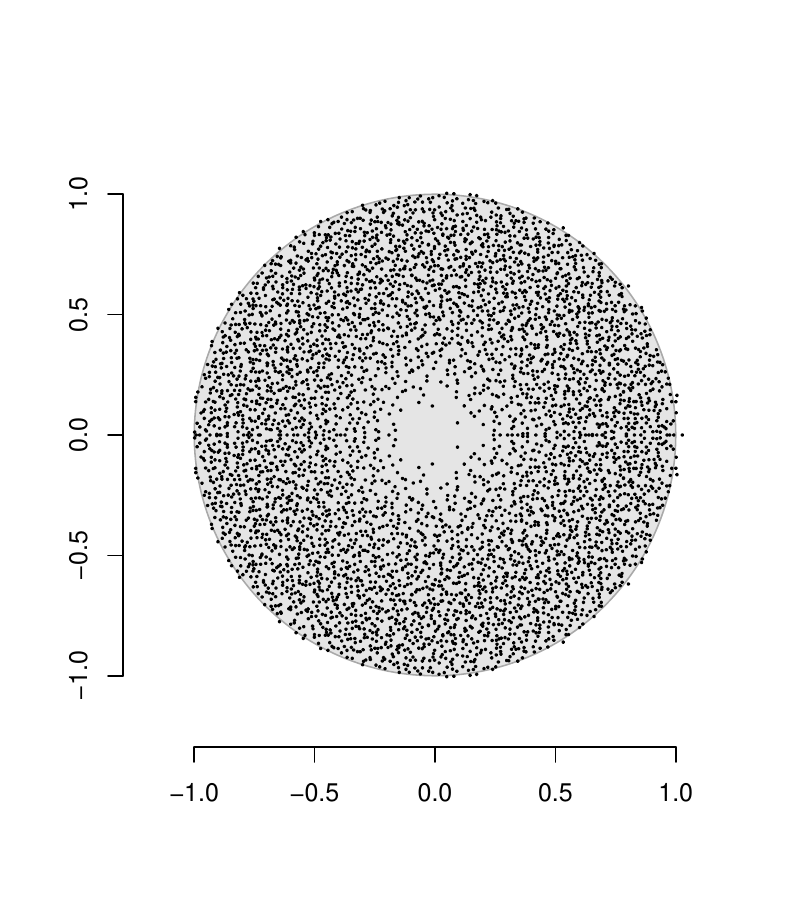} & \includegraphics[scale=0.38]{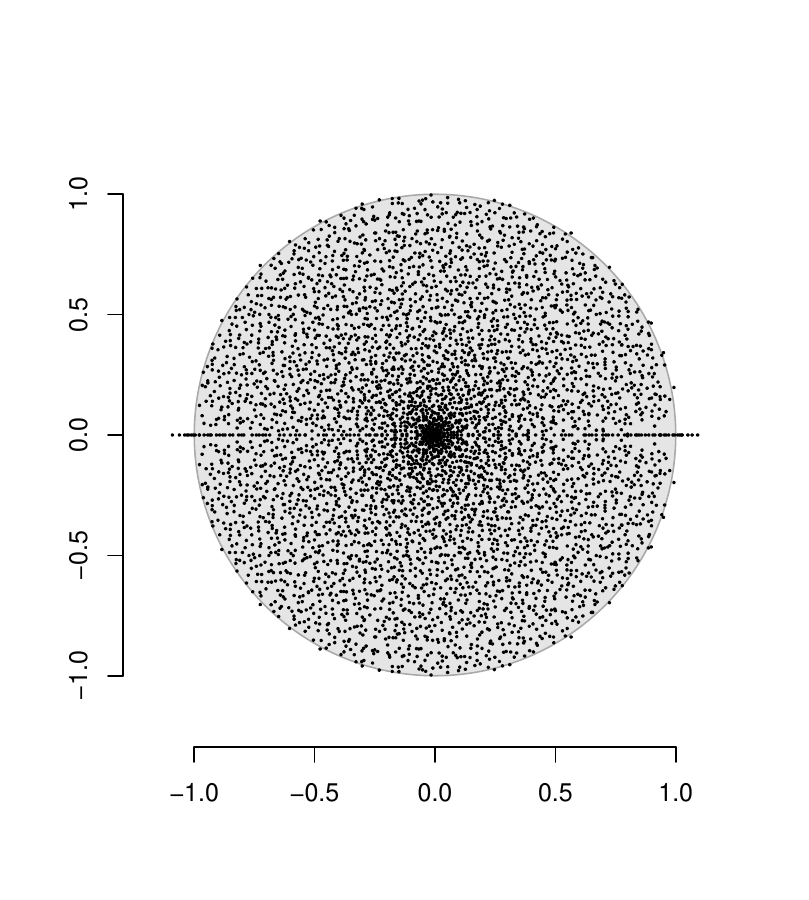}\\
    (a) & (b) & (c)
  \end{tabular}
  \caption{Spectrum of one realisation of $n^{-1/2}M_n$ for $n = 5000$ with $L_X(i, j) = i + j$, $L_Y(i, j) = j$. The component matrices are independent with each having (a) independent standard Gaussian entries, (b) independent Rademacher entries (i.e. random signs), and (c) entries from a copy of the SMS $\{\sqrt{2}\sin(2^n \pi U) : n \in \Z_+\}$, where $U \sim \mathrm{Uniform}(0, 1)$. These simulations suggest a lack of universality in this model, i.e. the LSM of $n^{-1/2}M_n$, if it exists, possibly depends on the distribution of the entries beyond dependence on just the first two moments.}
\label{fig:badXY}
\end{figure}
\end{remark}

\begin{remark}
In two special cases, Conjecture~\ref{conj2} can be proved easily:
\begin{enumerate}
    \item[(i)] $L_X(i, j) = L_Y(i, j) = (i, j)$.
    \item[(ii)] $L_X(i, j) = (i, j)$, the common distribution of the $x_{i, j}$'s is symmetric about $0$, and $(y_{\alpha})_{\alpha \in \Z^{d'}}$ is an i.i.d. Rademacher sequence. 
\end{enumerate}
In both cases, it is easy to see that $M_n$ is an IID matrix, and hence the LSM of $n^{-1/2}M_n$ is the circular law.
\end{remark}

The rest of the paper is organised as follows. In Section~\ref{sec:prelim}, we provide a review of the free-probabilistic concepts needed for our main results. In Section~\ref{sec:main}, we present our main results: In Sections~\ref{sec:main-gen} and \ref{sec:main-sms}, we state our assumptions on the input random variables and prove some general results. In Section~\ref{sec:toephank}, we state and prove the $*$-convergence result for the Schur-Hadamard product of nonsymmetric Toeplitz and Hankel matrices. Then, in Section~\ref{sec:gen}, we show how the proof technique for the Toeplitz and Hankel case extends to more general link-functions. Finally, in Section~\ref{sec:main-asconv}, we prove in-probability and almost sure versions of our $*$-convergence results.

%%%%%%%%%%%%%%%%%%%%%%%%%%%%%%%%%%%%%%%%%%%%%%%%%%
%% SECTION 2
%%%%%%%%%%%%%%%%%%%%%%%%%%%%%%%%%%%%%%%%%%%%%%%%%%
\section{A quick review of some free-probabilistic concepts}\label{sec:prelim}
A non-commutative probability space is a pair $(\A, \state)$, where $\A$ is a unital algebra over $\C$, and $\state$ is a \emph{state}, i.e.  a linear functional on $\A$ such that $\state (\bone_{\A}) = 1$ (here $\bone_{\A}$ is the unit of $\A$). 

A $*$-probability space is a non-commutative probability space $(\A, \state)$, where $\A$ is a unital $*$-algebra, and the state $\state$ is \emph{positive}, i.e.
\begin{equation*}
    \state(a^*a) \ge 0 \text{ for all } a \in \A.
\end{equation*}
The state $\state$ plays the same role in non-commutative probability as the expectation operator does in classical probability.

A natural example comes from the unital $*$-algebra $\mathcal{M}_n(\C)$ of $n\times n$ matrices over $\C$ (the identity matrix serves as the unit while the $*$-operation is given by taking conjugate transpose). This becomes a $*$-probability space when endowed with the state $\frac{1}{n}\tr$.

To deal with random matrices, given a probability space $(\Omega, \mathcal{B}, \mathbb{P})$, one may consider the unital $*$-algebra $\mathcal{M}_n(L^{\infty, -}(\Omega, \mathbb{P}))$  of random matrices whose entries are in $L^{\infty, -}(\Omega, \mathbb{P}) = \cap_{1 \le p < \infty}L^p(\Omega, \mathbb{P})$, the space of random variables with all moments finite. Equipped with the state $\frac{1}{n}\E\tr$, where $\E$ denotes expectation with respect to $\mathbb{P}$, this becomes a $*$-probability space.

\begin{definition}\label{def:freeness} (Free independence)
    Let $(\A, \state)$ be a non-commutative probability space and let $\{\A_i\}_{i \in I}$ be a collection of unital subalgebras of $\A$, indexed by a fixed set $I$. The subalgebras $\{\A_i\}_{i\in I}$ are called freely independent, or just free, if 
\begin{equation*}
    \state(a_1 \cdots a_k) = 0
\end{equation*}
for every $k\ge 1$, where
\begin{itemize}
    \item[(a)] $a_j\in\A_{i_j}$ for some $i_j \in I$;  

    \item[(b)] $\state(a_j)=0$ for every $1\leq j\leq k$; and 

    \item[(c)] neighbouring elements are from different subalgebras, i.e. $i_1 \neq i_2, i_2 \ne i_3, \ldots, i_{k - 1}\ne i_k$. 
\end{itemize} 
Elements $\{a_i\}_{i \in I}$ from $\A$ are called freely independent, or just free, if the unital subalgebras generated by each of them are free.
\end{definition}

From now on, $(\A, \state)$ will denote a $*$-probability space unless stated otherwise. For $a \in \A$, $\epsilon_i \in \{1, *\}, 1 \le i \le k, k\ge 1$, the number $\state(a^{\epsilon_1}\cdots a^{\epsilon_k})$ is called a \emph{$*$-moment} of $a$. If $a$ is \emph{self-adjoint}, i.e. if $a = a^*$, then $*$-moments reduce to \emph{moments} $\state(a^k)$.

\begin{definition}
    A semi-circular variable in a $*$-probability space is a self-adjoint element whose moments are given by
    \begin{equation*}
        \state(s^k) = \begin{cases}
            C_{k/2} & \text{ if $k$ is even},\\
            0 & \text{ if $k$ is odd},
        \end{cases}
    \end{equation*}
    where $C_{n} = \frac{1}{n + 1}\binom{2n}{n}$ is the $n$-th Catalan number. It turns out that $(\state(s^k))_{k \ge 1}$ is the moment sequence of the semi-circular law $\mu_\semicirc$.
\end{definition}
\begin{definition}\label{def:circular}
    A circular variable $c$ in a $*$-probability space $(\A, \state)$ is an element of the form
\[
    c = \frac{s_1 + i s_2}{\sqrt{2}},
\]
where $s_1$ and $s_2$ are freely independent semi-circular elements.
\end{definition}

\begin{definition}\label{def:*-dist}
The $*$-distribution of an element $a \in \A$ is the linear functional $\mu_a:\C\langle X, X^* \rangle \rightarrow \C$ such that
\[
    \mu_a(Q) = \state(Q(a, a^*)).
\]
This is equivalent to specifying all the $*$-moments 
\begin{equation*}
\{\state(a^{\epsilon_1} \cdots a^{\epsilon_k}) \mid \epsilon_i \in \{1, *\}, k \ge 1\}.
\end{equation*}
\end{definition}

The $*$-moments (and hence the $*$-distribution) of a circular variable are easy to calculate due to the fact that circular variables are the simplest examples of the so-called $R$-diagonal variables (see, e.g., Chapter 15 of \cite{Nica2006}).

To describe these $*$-moments, we first need to talk about non-crossing partitions. Given a totally ordered finite set $S$, we denote by $\P(S)$ the set of all partitions of $S$. This is in fact a lattice with respect to the reverse refinement partial order: given $\pi, \sigma \in \P(S)$, $\pi \le \sigma$ if each block of $\pi$ is contained in some block of $\sigma$.

Given a partition $\pi$, let $V_1, \ldots, V_k$ be its blocks, ordered by their smallest elements. $k$ is called the size of $\pi$ and is denoted by $|\pi|$. The smallest element of each block will be called a \emph{primary} element, and the rest of the elements \emph{secondary}. The smallest element of $V_1$ is the first primary element, that of $V_2$ the second primary element, and so on.

To give an example, let $S = \{2, 4, 5, 9, 11, 14\}$. Consider the partition $\pi_0 = \{\{2, 4, 11\}, \{5, 14\}, \{9\}\}$. In this case, we have three blocks: $V_1 = \{2, 4, 11\}, V_2 = \{5, 14\}$, and $V_3 = \{9\}$. The elements $2, 5, 9$ are the primary elements in that order, the rest are secondary elements.

A partition $\pi \in \P(S)$ is \emph{crossing} if there are elements $u_1 < u_2 < u_3 < u_4$ in $S$ such that $u_1, u_3 \in V_i$ and $u_2, u_4 \in V_j$ for two different blocks $V_i, V_j$ of $\pi$. The partition $\pi_0$ in the above example is crossing.

Let $NC(S)$ denote the set of all non-crossing partitions of $S$. This is a sub-lattice of $\P(S)$. A partition is called a pair-partition if all its blocks are doubletons. The set of all pair-partitions of $S$ will be denoted by $\P_2(S)$. Similarly, the set of all non-crossing pair-partitions of $S$ will be denoted by $NC_2(S)$.

We will typically take $S = [k] = \{1, \ldots, k\}$ for some integer $k$. We will write $\P(k), \P_2(k), NC(k)$, and $NC_2(k)$ instead of $\P([k]), \P_2([k])$, etc. The Catalan numbers introduced earlier count non-crossing partitions: $\# NC(k) = \# NC_2(2k) = C_k$.

We are now ready to describe the $*$-moments of a circular variable.
\begin{prop}\label{prop:circ}
Let $c$ be a circular variable. Then
\begin{equation*}
    \state(c^{\epsilon_1} \cdots c^{\epsilon_k}) = \begin{cases}
        \sum_{\pi \in NC_2(k)} \prod_{\{r, s\} \in \pi} (1 - \delta_{\epsilon_r, \epsilon_s}) & \text{ if $k$ is even,} \\
        0 & \text{ if $k$ is odd}.
    \end{cases}
\end{equation*}
\end{prop}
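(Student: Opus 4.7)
The plan is to expand $c = (s_1 + i s_2)/\sqrt{2}$ and $c^* = (s_1 - i s_2)/\sqrt{2}$, reduce the $*$-moment $\state(c^{\epsilon_1} \cdots c^{\epsilon_k})$ to a linear combination of mixed moments of the two free semicirculars $s_1, s_2$, and then invoke the standard mixed-moment formula for free semicirculars. Concretely, I would set $\eta_j = +1$ if $\epsilon_j = 1$ and $\eta_j = -1$ if $\epsilon_j = *$, so that $c^{\epsilon_j} = \frac{1}{\sqrt{2}}(s_1 + \eta_j\, i\, s_2)$. Multinomial expansion across $j = 1, \ldots, k$ gives
\[
c^{\epsilon_1}\cdots c^{\epsilon_k} = \frac{1}{2^{k/2}} \sum_{f:[k]\to\{1,2\}} i^{|f^{-1}(2)|} \Bigl(\prod_{j \in f^{-1}(2)} \eta_j\Bigr)\, s_{f(1)} \cdots s_{f(k)}.
\]

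Applying $\state$ and using the standard fact (a direct consequence of vanishing of mixed free cumulants; see, e.g., Lecture 8 of \cite{Nica2006}) that
\[
\state(s_{f(1)}\cdots s_{f(k)}) = \#\{\pi \in NC_2(k) : \pi \le \ker f\},
\]
which is automatically zero when $k$ is odd since $NC_2(k) = \emptyset$ in that case, already settles the second case of the proposition. For even $k$, I would interchange the sums over $f$ and $\pi$. For a fixed $\pi \in NC_2(k)$, the $f$'s with $\pi \le \ker f$ are exactly those constant on each of the $k/2$ blocks, so they are parametrised by subsets $V$ of the block set of $\pi$ (the blocks sent to $2$). Since $|f^{-1}(2)| = 2|V|$ is even, $i^{|f^{-1}(2)|} = (-1)^{|V|}$, and the inner sum factorises as
\[
\sum_{V \subseteq \pi} (-1)^{|V|} \prod_{\{r,s\} \in V} \eta_r \eta_s = \prod_{\{r,s\} \in \pi} (1 - \eta_r \eta_s).
\]

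Finally, noting that $1 - \eta_r \eta_s$ equals $2$ when $\epsilon_r \ne \epsilon_s$ and $0$ when $\epsilon_r = \epsilon_s$, we have $\prod_{\{r,s\} \in \pi}(1 - \eta_r \eta_s) = 2^{k/2} \prod_{\{r,s\} \in \pi}(1 - \delta_{\epsilon_r, \epsilon_s})$, and the factor $2^{k/2}$ cancels the prefactor $2^{-k/2}$ to yield the claimed formula. The only nontrivial ingredient is the free-semicircular mixed-moment formula; everything else is sign-and-subset bookkeeping. I do not anticipate a serious obstacle, but one should be careful to track the parity argument for odd $k$ and verify the identification $1 - \eta_r \eta_s = 2(1 - \delta_{\epsilon_r, \epsilon_s})$.
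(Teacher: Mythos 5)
Your argument is correct. Note, however, that the paper does not actually prove Proposition~\ref{prop:circ}: it states it as a known consequence of the fact that circular elements are the simplest $R$-diagonal variables, deferring to Chapter 15 of \cite{Nica2006}. Your route is a self-contained, elementary alternative: you expand $c^{\epsilon_j} = \tfrac{1}{\sqrt 2}(s_1 + \eta_j i s_2)$, reduce everything to the mixed-moment formula $\state(s_{f(1)}\cdots s_{f(k)}) = \#\{\pi \in NC_2(k) : \pi \le \ker f\}$ for a free standard semicircular family, and then do the subset--product bookkeeping. All the steps check out: the odd case follows from $NC_2(k) = \emptyset$; for even $k$ the $f$'s constant on the blocks of $\pi$ are indeed indexed by the set $V$ of blocks mapped to $2$, the parity $|f^{-1}(2)| = 2|V|$ gives $i^{|f^{-1}(2)|} = (-1)^{|V|}$, the sum over $V$ factorises as $\prod_{\{r,s\}\in\pi}(1-\eta_r\eta_s)$, and the identity $1-\eta_r\eta_s = 2(1-\delta_{\epsilon_r,\epsilon_s})$ absorbs the $2^{-k/2}$ prefactor. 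What the citation-based route buys is brevity and the conceptual placement of circular variables inside the $R$-diagonal framework (relevant later for the Brown measure discussion); what your computation buys is that the only input is the free semicircular moment formula, so the proposition becomes verifiable without invoking $R$-diagonality or free cumulants beyond that single standard fact.
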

Circular variables have $\mu_{\cir}$ as their Brown measure, which is a generalisation of spectral measures of self-adjoint operators to non-normal operators. We will not define Brown measures here and, instead, refer the interested reader to Chapter 11 of \cite{Mingo2017}.

Now we discuss the notion of convergence in $*$-distribution (also called convergence in $*$-moments). We will sometimes abbreviate this to just $*$-convergence.
\begin{definition}\label{def:*-conv}
    Let $(\A_n, \state_n)$ be a sequence of $*$-probability spaces. We say that $a_n \in \A_n$ converges in \emph{$*$-distribution} to $a \in (\A, \state)$ if for all $Q \in \C\langle X, X^* \rangle$, one has
\begin{equation*}
    \mu_{a_n}(Q) \xrightarrow{n \rightarrow \infty} \mu_a(Q).
\end{equation*}
This is equivalent to the requirement that for any $k \ge 1$ and symbols $\epsilon_1, \ldots, \epsilon_k \in \{1, *\}$, one has convergence of the corresponding $*$-moments:
\begin{equation*}
    \state_n(a_n^{\epsilon_{1}} \cdots a_n^{\epsilon_{k}}) \xrightarrow{n \rightarrow \infty} \state(a^{\epsilon_{1}} \cdots a^{\epsilon_{k}}).
\end{equation*}
\end{definition}

As discussed earlier, a natural host space for $n\times n$ random matrices is the $*$-probability space $(\A_n, \state_n) = (\mathcal{M}_n(L^{\infty, -}(\Omega, \mathbb{P})), \frac{1}{n}\E \tr)$. In Section~\ref{sec:main-gen}, we will be concerned with $*$-convergence of $n^{-1/2} M_n$ as a member of this $*$-probability space. In Section~\ref{sec:main-asconv}, we will consider almost sure $*$-convergence of $n^{-1/2}M_n$ as a member of the $*$-probability space $(\mathcal{M}_n(\C), \frac{1}{n}\tr)$.

%%%%%%%%%%%%%%%%%%%%%%%%%%%%%%%%%%%%%%%%%%%%%%%%%%
%% SECTION 3
%%%%%%%%%%%%%%%%%%%%%%%%%%%%%%%%%%%%%%%%%%%%%%%%%%
\section{Main results}\label{sec:main}
We will first work out some general results when our matrices have entries that are independent random variables with zero mean, unit variance, and uniformly bounded moments of all orders. Then, in Section~\ref{sec:main-sms}, we will allow the entries to form SMSs. Our experience from Section~\ref{sec:main-gen} will lead us naturally to an admissibility condition on the SMSs, which will be used as a substitute for independence in killing certain joint moments of the entries.
\subsection{Some general results}\label{sec:main-gen}

\begin{assumption}\label{assm:entries}
    The input random variables $\{x_{\alpha}\}_{\alpha \in \Z^d}$ are independent with zero mean, unit variance, and uniformly bounded moments of all orders, i.e. for any $p \ge 1$,
    \begin{equation*}
    \sup_{\alpha \in \Z^d} \E |x_{\alpha}|^p < \infty.
    \end{equation*}
    Note that this is true if $\{x_\alpha\}_{\alpha \in \Z^d}$ is an i.i.d. collection with zero mean, unit variance, and all moments finite.
    We make the same assumptions on $\{y_\beta\}_{\beta \in \Z^{d'}}$. Moreover, we assume that $\{x_\alpha\}_{\alpha \in \Z^d}$ and $\{y_\beta\}_{\beta \in \Z^{d'}}$ are independent.
\end{assumption}
We note that these assumptions do imply that $M_n \in \mathcal{M}_n(L^{\infty, -}(\Omega, \mathbb{P}))$.

For $\epsilon \in \{1, *\}$, let 
\[
    L^\epsilon(i, j) = \begin{cases}
        L(i, j) & \text{ if } \epsilon = 1, \\
        L(j, i) & \text{ if } \epsilon = *.
    \end{cases}
\]
If $A = ((a_{L(i, j)}))$ is a patterned matrix, then we may write, using the above notation, $A^\epsilon_{ij} = a_{L^\epsilon(i, j)}$.

We now expand the $*$-moments of $n^{-1/2}M_n$. Below $\bi$ denotes a multi-index $(i_1, i_2, \ldots, i_k) \in [n]^k$ and $\epsilon_i \in \{1, *\}, 1 \le i \le k$, are fixed symbols.
\begin{align}\label{eq:expansion}\nonumber
    \state_n(&(n^{-1/2}M_n)^{\epsilon_1} \cdots (n^{-1/2}M_n)^{\epsilon_k}) \\ \nonumber
             &= \frac{1}{n^{1 + k/2}}\E \tr (M_n^{\epsilon_1} \cdots M_n^{\epsilon_{k}}) \\ \nonumber
                      &= \frac{1}{n^{1 + k/2}} \sum_{\bi \in [n]^k} \E M^{\epsilon_1}_{n, i_1i_2} \cdots M^{\epsilon_{k}}_{n, i_{k}i_1} \\ \nonumber
                      &= \frac{1}{n^{1 + k/2}} \sum_{\bi \in [n]^k} \E X^{\epsilon_1}_{n, i_1i_2} \cdots X^{\epsilon_{k}}_{n, i_{k}i_1} Y^{\epsilon_1}_{n, i_1i_2} \cdots Y^{\epsilon_{k}}_{n, i_{k}i_1} \\ \nonumber
                      &= \frac{1}{n^{1 + k/2}} \sum_{\bi \in [n]^k} \E x_{L_X^{\epsilon_1}(i_1, i_2)} \cdots x_{L_X^{\epsilon_{k}}(i_{k}, i_1)} \E y_{L_Y^{\epsilon_1}(i_1, i_2)} \cdots y_{L_Y^{\epsilon_{k}}(i_{k}, i_1)} \\
                      &= \frac{1}{n^{1 + k/2}} \sum_{\bi \in [n]^k} \E x^{\epsilon}_{\bi} \E y^{\epsilon}_{\bi},
\end{align}
where $x^{\epsilon}_{\bi}$ and $y^{\epsilon}_{\bi}$ are convenient shorthands for $x_{L_X^{\epsilon_1}(i_1, i_2)} \cdots x_{L_X^{\epsilon_{k}}(i_{k}, i_1)}$ and $y_{L_Y^{\epsilon_1}(i_1, i_2)} \cdots y_{L_Y^{\epsilon_{k}}(i_{k}, i_1)}$, respectively.

To any $\bi \in [n]^k$, associated is a partition of $[k]$ induced by the values of $L$ for fixed $\epsilon_1, \ldots, \epsilon_k$: $u, v$ belong to the same block if $L^{\epsilon_u}(i_u, i_{u + 1}) = L^{\epsilon_v}(i_v, i_{v + 1})$, where $i_{k + 1} \equiv i_1$. For a partition $\pi \in \P(k)$, we denote by $[\pi]_{L, \epsilon} \equiv [\pi]_L$ the set of all multi-indices $\bi \in [n]^k$, for which the associated partition is $\pi$. To give an example, let $k = 4$, $n = 6$, $\epsilon_1 = \epsilon_3 = 1$, $\epsilon_2 = \epsilon_4 = *$, and $L$ be the Toeplitz link-function. Then the multi-index $\bi = (5, 6, 1, 6)$ is associated with the partition $\pi = \{ \{1, 4\}, \{2, 3\} \}$ of $[4]$, since $L^{\epsilon_1}(i_1, i_2) = -1 = L^{\epsilon_4}(i_4, i_1)$ and $L^{\epsilon_2}(i_2, i_3) = -5 = L^{\epsilon_3}(i_3, i_4)$. We will sometimes refer to $L^{\epsilon_u}(i_u, i_{u + 1})$ as an \emph{$L$-value} at location $u$. Thus, in the above example, $-1$ and $-5$ are $L$-values in $\bi = (5, 6, 1, 6)$, both of which are repeated twice ($-1$ at locations $1$ and $4$, and $-5$ at locations $2$ and $3$). Finally, note that $\{[\pi]_L\}_{\pi \in \P(k)}$ is a partition of $[n]^k$.

Using these concepts, we rewrite \eqref{eq:expansion} as
\begin{align}\label{eq:expansionfinal}\nonumber
    \state_n((n^{-1/2}M_n)^{\epsilon_1} &\cdots (n^{-1/2}M_n)^{\epsilon_k}) \\
             &= \frac{1}{n^{1 + k/2}} \sum_{\pi, \pi' \in \P(k)} \sum_{\bi \in [\pi]_{X} \cap [\pi']_{Y}} \E x^{\epsilon}_{\bi} \E y^{\epsilon}_{\bi},
\end{align}
where we use the shorthand $[\pi]_{X} \equiv [\pi]_{L_X}$.

Let $\bi \in [\pi]_X$. $i_1$ and $i_{r + 1}$ for every primary element $r$ of $\pi$ are called \emph{generating indices} (here $i_{k + 1} \equiv i_1$). As there are $|\pi|$ many primary elements, the total number of generating indices in $\bi$ is $|\pi|$ if $k$ is a primary element of $\pi$ (i.e. $\{k\}$ is a block of $\pi$), and $|\pi| + 1$ otherwise.

The following assumption is crucial for analysing symmetric patterned random matrices (see, e.g., \cite{bose2018patterned}). We will also need it here.
\begin{assumption}\label{assm:propB}
    For a link-function $L$, let
    \begin{align}\label{eq:propB}\nonumber
        \Delta_{L} := \sup_n \sup_{t \in \mathrm{range}(L)} \max\bigg \{ \sup_{j \in [n]} \#\{i \mid i \in [n], L(i, j) = t\}, \sup_{i \in [n]} \#\{j \mid j \in [n], L(i, j) = t\} \bigg\}.
    \end{align}
    Thus $\Delta_L < \infty$ means that the total number of times a particular input variable can appear in a row or column is uniformly bounded.
    We assume that $\max\{\Delta_{L_X}, \Delta_{L_Y}\} < \infty$.
\end{assumption}
Note that for both the nonsymmetric Toeplitz and the Hankel link-functions, $\Delta_L = 1$. An important consequence of having $\Delta_L < \infty$ is that any non-generating index can be determined, up to a bounded number of choices, from its predecessor generating indices. We record this in the following lemma, which will be used repeatedly later.
\begin{lemma}\label{lem:non-gen}
Suppose $\Delta_L < \infty$. Let $\bi \in [\pi]_L$. Then one can determine any non-generating $i_s$, up to a bounded number of choices, from all previous generating $i_r, r < s$.
\end{lemma}
\begin{proof}
Let $i_{s}$ be a non-generating index. Then $s \ne 1$. Further, $1$ is a primary element, so that $i_2$ is generating. Thus we must have $s > 2$. Also, $s - 1$ must be a secondary element. Thus there exists some $s' < s - 1$ such that $s', s - 1$ belong to the same block of $\pi$. Now we have the relation
\begin{equation}\label{eq:gen-back-repr}
    L^{\epsilon_{s'}}(i_{s'}, i_{s' + 1}) = L^{\epsilon_{s - 1}}(i_{s - 1}, i_s).
\end{equation}
As $\Delta_L < \infty$, $i_s$ can be determined from \eqref{eq:gen-back-repr}, up to a bounded number of choices, for any fixed $i_{s'}, i_{s' + 1}, i_{s - 1}$. 

We now use an inductive argument. Taking $i_{s_1}$ to be the first non-generating index, we see that $i_{s_1}$ can be determined from the choices of the preceding indices all of which are generating. Assume that the first $t$-many non-generating indices can be determined from their predecessor generating indices. By the argument given in the previous paragraph, the $(t + 1)$-th non-generating index $i_{s_{t + 1}}$ can be determined by previously occurring indices. But since all previously occurring non-generating indices can be determined by their predecessor generating indices, we conclude that $i_{s_{t + 1}}$ is determined by its predecessor generating indices.
\end{proof}
The proof of the following lemma is standard. We include it here for completeness.
\begin{lemma}\label{lem:part}
    Let $k \ge 1$ and $\pi \in \mathcal{P}(k)$. If $\Delta_L < \infty$, then $\# [\pi]_{L} \le C_{L, k}\, n^{1 + |\pi|}$ for some constant $C_{L, k} > 0$, depending only on the link-function $L$ and $k$.
\end{lemma}
\begin{proof}
    $\bi \in [\pi]_L$ implies that if $u, v$ belong to the same block of $\pi$, then we have the constraint
    \begin{equation*}
        L^{\epsilon_u}(i_u, i_{u + 1}) = L^{\epsilon_v}(i_v, i_{v + 1}).
    \end{equation*}
    Therefore, if $\Delta_L < \infty$, then once the generating indices of $\bi$ have been chosen in one of at most $n^{|\pi| + 1}$ many ways, the non-generating indices can be chosen in at most $\Delta_L^{k - |\pi|}$ ways. We can thus take $C_{L, k} = \Delta_L^{k - 1}$.
\end{proof}
\begin{lemma}\label{lem:nonpair}
    Suppose that Assumptions~\ref{assm:entries} and \ref{assm:propB} hold. If $\pi$ and $\pi'$ are not both pair-partitions, then
\begin{equation}\label{eq:nonpair}
    \frac{1}{n^{1 + k/2}} \sum_{\bi \in [\pi]_{X} \cap [\pi']_{Y}} \E x^{\epsilon}_{\bi} \E y^{\epsilon}_{\bi} = o(1).
\end{equation}
\end{lemma}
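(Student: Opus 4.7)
The plan is to show that the integrand $\E x^{\epsilon}_{\bi} \, \E y^{\epsilon}_{\bi}$ either vanishes identically or, failing that, the cardinality of the surviving index set is too small to survive the normalisation $n^{1+k/2}$. First I would dispose of any singleton blocks. If $\pi$ contains a singleton block $\{u\}$, then by definition of $[\pi]_X$ the variable $x_{L_X^{\epsilon_u}(i_u, i_{u+1})}$ appears exactly once in the product $x^{\epsilon}_{\bi}$ and, by the i.i.d. assumption, is independent of all other factors; using $\E x_\alpha = 0$ the expectation $\E x^{\epsilon}_{\bi}$ factors across the blocks of $\pi$ and one factor is zero. The same argument applies if $\pi'$ has a singleton. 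In either case the entire sum is identically zero.

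Assume therefore that every block of $\pi$ and every block of $\pi'$ has size at least two. Since $\pi$ and $\pi'$ are not both pair partitions, without loss of generality $\pi$ contains some block of size at least three (and if $k$ is odd, no partition of $[k]$ into blocks of size $\ge 2$ can be a pair partition at all). Either way, $|\pi| \le \lfloor (k-1)/2 \rfloor$, so in particular $|\pi| < k/2$. This combinatorial observation is the one substantive ingredient of the proof.

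With this in hand, the second step is to apply Lemma~\ref{lem:part} to the link function $L_X$, which gives $\#[\pi]_X = O_{L_X}(n^{1+|\pi|})$. Bounding the intersection crudely by $\#([\pi]_X \cap [\pi']_Y) \le \#[\pi]_X$, and using the uniform bound $|\E x^{\epsilon}_{\bi} \E y^{\epsilon}_{\bi}| \le C_k$ (a constant depending only on $k$, obtained by expanding each expectation as a product of at most $k$ moments, each of which is finite by Assumption~\ref{assm:entries}), one arrives at
\begin{equation*}
\Biggl| \frac{1}{n^{1+k/2}} \sum_{\bi \in [\pi]_X \cap [\pi']_Y} \E x^{\epsilon}_{\bi} \, \E y^{\epsilon}_{\bi} \Biggr| \le \frac{C_k \cdot O_{L_X}(n^{1+|\pi|})}{n^{1+k/2}} = O(n^{|\pi| - k/2}) = o(1),
\end{equation*}
because $|\pi| < k/2$.

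The main (and in fact only) conceptual step here is the combinatorial observation in paragraph two: the twin assumptions that neither $\pi$ nor $\pi'$ has a singleton block, and that at least one of them fails to be a pair partition, together force $\min(|\pi|, |\pi'|) < k/2$. Once this is noted, one does not need to exploit the joint constraint $[\pi]_X \cap [\pi']_Y$ beyond the trivial intersection bound---the surplus factor of $n^{-1}$ relative to the pair-partition regime is already enough to force decay. I therefore expect no real obstacle, only careful bookkeeping.
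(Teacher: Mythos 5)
Your proposal is correct and follows essentially the same route as the paper: dispose of singleton blocks via centeredness and independence, then for partitions with all blocks of size at least two but at least one block of size at least three, use the uniform moment bound together with Lemma~\ref{lem:part} and the trivial intersection bound $\#([\pi]_X\cap[\pi']_Y)\le \#[\pi]_X$ to get $O(n^{1+|\pi|})$ with $|\pi|<k/2$. Your extra remarks (the explicit bound $|\pi|\le\lfloor(k-1)/2\rfloor$ and the observation about odd $k$) are correct refinements of the same argument.
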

\begin{proof}
    If $\pi$ or $\pi'$ have any singleton blocks, then, by the centredness and independence of the input variables, $\E x_{\bi}^{\epsilon} \E y_{\bi}^{\epsilon} = 0$ for any $\bi \in [\pi]_X \cap [\pi']_Y$, and hence the left-hand side of \eqref{eq:nonpair} is trivially zero. 

So we assume that all the blocks of $\pi$ and $\pi'$ are of size at least $2$. Since not both are pair-partitions, one of them, say $\pi$, has a block of size at least $3$. 
Since the input random variables have uniformly bounded moments of all orders, $|\E x_{\bi}^{\epsilon} \E y_{\bi}^{\epsilon}| = O(1)$ by H\"{o}lder's inequality. Thus it suffices to show that
\[
    \# [\pi]_X \cap [\pi]_Y = o(n^{1 + k/2}).
\]
This follows from Lemma~\ref{lem:part} because
\[
    \# [\pi]_X = O(n^{1 + |\pi|}) = o(n^{1 + k/2}), 
\]
where we have used the bound $|\pi| < k/2$ which is true since each block of $\pi$ has size at least $2$ and there is at least one block of size at least $3$.
\end{proof}
From Lemma~\ref{lem:nonpair}, we immediately get that for odd $k$,
\begin{equation}\label{eq:oddmoments}
    \state_n((n^{-1/2}M_n)^{\epsilon_1} \cdots (n^{-1/2}M_n)^{\epsilon_k}) = o(1). 
\end{equation}
From now on, we shall work with $k$ even and write $2k$ instead of $k$ in the forthcoming expressions. 

Note that for pair-partitions $\pi, \pi' \in \P_2(2k)$, if $\bi \in [\pi]_X \cap [\pi']_Y$, then $\E x_{\bi}^\epsilon = \E y_{\bi}^{\epsilon} = 1$. It follows that
\begin{equation}\label{eq:pp}
    \frac{1}{n^{1 + k}} \sum_{\bi \in [\pi]_X \cap [\pi']_Y} \E x_{\bi}^{\epsilon} \E y_{\bi}^\epsilon = \frac{\# [\pi]_X \cap [\pi']_Y}{n^{1 + k}}.
\end{equation}
Motivated by the respective definitions in \cite{bose2014bulk}, we make the following definitions.
\begin{definition}
    Two link-functions $L_X$ and $L_Y$ are said to be \emph{compatible} if for any $\pi \ne \pi' \in \P_2(2k)$, we have
    \begin{equation*}
        \frac{\# [\pi]_X \cap [\pi']_Y}{n^{1 + k}} = o(1).       
    \end{equation*}
\end{definition}

\begin{definition}
    Two link-functions $L_X$ and $L_Y$ are said to be \emph{jointly circular} if
    \begin{equation*}
        \frac{\# [\pi]_X \cap [\pi]_Y}{n^{1 + k}} \xrightarrow{n\rightarrow \infty} \begin{cases}
            \prod_{\{r, s\} \in \pi} (1 - \delta_{\epsilon_r, \epsilon_s}) & \text{ if } \pi \in NC_2(2k), \\
            0 & \text{ if } \pi \in \P_2(2k)\setminus NC_2(2k).
        \end{cases}
    \end{equation*}
\end{definition}
\begin{theorem}\label{thm:main}
    Suppose that Assumptions~\ref{assm:entries} and \ref{assm:propB} hold.
    If the two link-functions $L_X$ and $L_Y$ are compatible and jointly circular, then $n^{-1/2}M_n$ converges in $*$-distribution to a circular variable in expected normalised trace.
\end{theorem}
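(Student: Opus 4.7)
The plan is to carry out the straightforward bookkeeping now that the hypotheses of compatibility and joint circularity are tailor-made to match the $*$-moments of a circular variable given by Proposition~\ref{prop:circ}. By Definition~\ref{def:*-conv}, it suffices to show that for every $k \ge 1$ and every choice $\epsilon_1, \ldots, \epsilon_k \in \{1, *\}$, the $*$-moment $\state_n((n^{-1/2}M_n)^{\epsilon_1} \cdots (n^{-1/2}M_n)^{\epsilon_k})$ converges to the corresponding $*$-moment of a circular variable.

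First I would dispose of the odd case: by \eqref{eq:oddmoments} (itself a consequence of Lemma~\ref{lem:nonpair}) these moments are already $o(1)$, which agrees with Proposition~\ref{prop:circ} since odd $*$-moments of a circular variable vanish. For the even case, I would apply Lemma~\ref{lem:nonpair} once more: writing $2k$ in place of $k$, all contributions to \eqref{eq:expansionfinal} from pairs $(\pi, \pi')$ where at least one of $\pi, \pi'$ is not a pair-partition are $o(1)$. Because $\P_2(2k)$ is a finite (hence $k$-independent of $n$) set, the finitely many remaining $o(1)$ error terms can be absorbed, leaving
\begin{equation*}
    \state_n((n^{-1/2}M_n)^{\epsilon_1} \cdots (n^{-1/2}M_n)^{\epsilon_{2k}}) = \sum_{\pi, \pi' \in \P_2(2k)} \frac{\#\, [\pi]_X \cap [\pi']_Y}{n^{1 + k}} + o(1),
\end{equation*}
where I have used \eqref{eq:pp} to identify the surviving inner sums with the cardinalities.

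Next I would split this finite sum according to whether $\pi = \pi'$ or not. The off-diagonal terms $\pi \ne \pi'$ are $o(1)$ by the compatibility assumption, and since $\P_2(2k)$ is finite their sum is still $o(1)$. For the diagonal terms $\pi = \pi'$, joint circularity gives the limit $\prod_{\{r, s\} \in \pi} (1 - \delta_{\epsilon_r, \epsilon_s})$ when $\pi \in NC_2(2k)$ and $0$ otherwise. Collecting everything,
\begin{equation*}
    \lim_{n \to \infty} \state_n((n^{-1/2}M_n)^{\epsilon_1} \cdots (n^{-1/2}M_n)^{\epsilon_{2k}}) = \sum_{\pi \in NC_2(2k)} \prod_{\{r, s\} \in \pi} (1 - \delta_{\epsilon_r, \epsilon_s}),
\end{equation*}
which is precisely the formula for $\state(c^{\epsilon_1} \cdots c^{\epsilon_{2k}})$ supplied by Proposition~\ref{prop:circ}.

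There is no real obstacle here: once the definitions of compatibility and joint circularity are in place, this theorem is essentially a re-labelling of those hypotheses via the moment expansion \eqref{eq:expansionfinal}. The genuine difficulty in the paper is not this step but the verification of compatibility and joint circularity for concrete link functions such as the non-symmetric Toeplitz and Hankel pair, where one must count solutions of $L_X^{\epsilon_u}(i_u, i_{u+1}) = L_X^{\epsilon_v}(i_v, i_{v+1})$ jointly with the analogous system for $L_Y$ and show the counts behave exactly as the non-crossing structure dictates.
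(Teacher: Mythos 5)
Your argument is correct and follows essentially the same route as the paper's own proof: dispose of odd moments via \eqref{eq:oddmoments}, reduce the even moments to the pair-partition sum using Lemma~\ref{lem:nonpair} and \eqref{eq:pp}, kill the off-diagonal terms by compatibility, and evaluate the diagonal terms by joint circularity to match Proposition~\ref{prop:circ}. Your explicit remark that the finiteness of $\P_2(2k)$ justifies absorbing the error terms is a small but welcome point of care that the paper leaves implicit.
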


\begin{proof}
    From \eqref{eq:oddmoments}, we know that if $k$ is odd, then 
    \begin{equation*}
        \state_n((n^{-1/2}M_n)^{\epsilon_1} \cdots (n^{-1/2}M_n)^{\epsilon_k}) \xrightarrow{n\rightarrow \infty} 0.
    \end{equation*}
    On the other hand, by Lemma~\ref{lem:nonpair} and \eqref{eq:pp}, we have
\begin{align*}
    \state_n((n^{-1/2}&M_n)^{\epsilon_1} \cdots (n^{-1/2}M_n)^{\epsilon_{2k}}) \\
                                        &= \sum_{\pi, \pi' \in \P_2(2k)}\frac{\# [\pi]_X \cap [\pi']_Y}{n^{1 + k}} + o(1) \\
                                        &= \sum_{\pi \in \P_2(2k)}\frac{\# [\pi]_X \cap [\pi]_Y}{n^{1 + k}} + o(1) \quad \text{(by compatibility)} \\
                                        &=\sum_{\pi \in NC_2(2k)}\prod_{\{r, s\} \in \pi} (1 - \delta_{\epsilon_r, \epsilon_s}) + o(1) \quad \text{(by joint circularity)}.
\end{align*}
Thus the $*$-moments of $n^{-1/2}M_n$ converge to those of a circular variable as described in Proposition~\ref{prop:circ}. This completes the proof.
\end{proof}

In Sections~\ref{sec:toephank} and \ref{sec:gen}, we will find conditions on $L_X$ and $L_Y$ that make them compatible and jointly circular.

\subsection{Entries that form SMSs}\label{sec:main-sms}

\begin{definition}\label{defn:sms}
    A collection of random variables $\{x_{\alpha}\}_{\alpha \in I}$ is called an SMS if for all distinct $\alpha_k \in I$ and $\eta_k \in \{1, 2\}$, $1 \le k \le n$, $n \ge 1$, one has
    \[
        \E \prod_{k = 1}^n x_{\alpha_k}^{\eta_k} = \begin{cases}
            1 & \text{ if } \eta_1 = \cdots = \eta_k = 2, \\
            0 & \text{ otherwise.}
        \end{cases}
    \]
\end{definition}

A trivial example of an SMS would be a collection of independent random variables with zero mean and unit variance. More generally, any martingale difference sequence $(D_n)_{n \in Z_+}$ is an SMS, provided that $\E D_1^2 = 1$, and $\E [D_n^2 \mid D_{n - 1}, \ldots, D_1] = 1$ for all $n \ge 2$\footnote{Examples of such systems are simple to construct. For instance, set $D_1 = 2 \sqrt{2\alpha_1 + 1}(Y_1 - \frac{1}{2})$ with $Y_1 \sim \mathrm{Beta}(\alpha_1, \alpha_1)$, $\alpha_1 > 0$, and for $n \ge 2$, take
\[
    D_n \mid D_{n - 1}, \ldots, D_1 = 2\sqrt{2\alpha_n + 1} \bigg(Y_n - \frac{1}{2}\bigg),
\]
where $Y_n \sim \mathrm{Beta}(\alpha_n, \alpha_n)$ and $\alpha_n = \alpha_n(D_{n - 1}, \ldots, D_1)$ is some positive measurable function of $D_1, D_2, \ldots, D_{n - 1}$.}. Another nontrivial example, which is not a martingale difference sequence, is the collection $\{\sqrt{2}\sin(2^n \pi U) : n \in \Z_+\}$, where $U \sim \mathrm{Uniform}(0, 1)$. 

If $\{x_{\alpha}\}_{\alpha \in \Z^d}$ is an SMS, $L:\Z_+^2 \rightarrow \Z^d$ is a link-function, and $\bi \in [\pi]_L$, where $\pi$ is a pair-partition, then it is clear that $\E x_{\bi}^\epsilon = 1$. To deal with other partitions, we need further assumptions.

\begin{definition}\label{defn:asms}
    Say that an SMS $\{x_{\alpha}\}_{\alpha \in I}$, with $x_{\alpha}$ having all moments finite for any $\alpha \in I$, is \emph{admissible} if for all distinct $\alpha_k \in I$ and $\eta_k \ge 1$, $1 \le k \le n$, $n \ge 1$, one has
\[
    \E \prod_{k = 1}^n x_{\alpha_k}^{\eta_k} = 0
\]
if there is at least one $k$ such that $\eta_k = 1$.
\end{definition}

A trivial example of an admissible SMS would be a collection of independent random variables with zero mean, unit variance, and all moments finite. Of course, SMSs arising from more general martingale difference sequences do not necessarily satisfy the admissibility condition. Finally, one can verify that the SMS $\{\sqrt{2}\sin(2^n \pi U) : n \in \Z_+\}$, where $U \sim \mathrm{Uniform}(0, 1)$, is also admissible. 

If $\{x_{\alpha}\}_{\alpha \in \Z^d}$ is an admissible SMS, $L:\Z_+^2 \rightarrow \Z^d$ is a link-function, and $\bi \in [\pi]_L$, where $\pi$ is a partition containing a singleton block, then it is clear that $\E x_{\bi}^\epsilon = 0$.

We tackled partitions that contain no singleton blocks and at least one block of size $3$ using Lemma~\ref{lem:part} and the assumption of uniformly bounded moments of all orders. Note that the collection $\{\sqrt{2}\sin(2^n \pi U) : n \in \Z_+\}$, where $U \sim \mathrm{Uniform}(0, 1)$, satisfies this by virtue of being uniformly bounded.

The discussions above show that Assumption~\ref{assm:entries} in Lemma~\ref{lem:nonpair} and Theorem~\ref{thm:main} can be replaced by the following weaker assumption.
\begin{assumption-alt}{assm:entries}\label{assm:entries-sms}
We assume that $\{x_{\alpha}\}_{\alpha \in \Z^d}$ is an admissible SMS with uniformly bounded moments of all orders, i.e. for any $p \ge 1$, one has
\[
    \sup_{\alpha \in \Z^d} \E |x_{\alpha}|^p < \infty.
\]
We make the same assumptions on $\{y_\beta\}_{\beta \in \Z^{d'}}$. Moreover, we assume that $\{x_\alpha\}_{\alpha \in \Z^d}$ and $\{y_\beta\}_{\beta \in \Z^{d'}}$ are independent.
\end{assumption-alt}

\subsection{Toeplitz and Hankel}\label{sec:toephank}
In what follows, we will often find constraints between generating indices. Such constraints will be referred to as \emph{killing} constraints, because existence of such a constraint will make $\# [\pi]_X \cap [\pi]_Y = o(n^{1 + k})$, thus killing its asymptotic contribution to the relevant $*$-moment.

Also, in non-crossing pair-partitions, there will always exist a block of adjacent elements. Such blocks will be referred to as \emph{good}. Once we remove a good block, there will be a new good block in the remaining partition (of the reduced set). For example, in the case $k = 3$, consider the non-crossing pair-partition $\{\{1, 6\}, \{2, 5\}, \{3, 4\}\}$. Here $\{3, 4\}$ is a good block. After we remove $\{3, 4\}$, we get a new good block $\{2, 5\}$ of adjacent elements in the reduced set $\{1, 2, 5, 6\}$.

We first prove a general result regarding joint-circularity. 
\begin{lemma}\label{lem:suff}
    Suppose the map $(i, j) \mapsto (L_X(i, j), L_Y(i, j))$ is injective and Assumption~\ref{assm:propB} holds. Then $L_X$ and $L_Y$ are jointly circular.
\end{lemma}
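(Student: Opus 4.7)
The plan is to use the injectivity hypothesis to convert the joint $L_X$- and $L_Y$-value constraints into direct constraints on the indices, and then reduce the problem to a standard combinatorial count of equivalence classes on a cyclic set.

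Fix $\pi \in \P_2(2k)$ and signs $\epsilon$. I would first observe that for each pair $\{r, s\} \in \pi$, the simultaneous conditions $L_X^{\epsilon_r}(i_r, i_{r+1}) = L_X^{\epsilon_s}(i_s, i_{s+1})$ and the analogous one for $L_Y$ are, by injectivity of $(i,j) \mapsto (L_X(i,j), L_Y(i,j))$, equivalent to the identification $(i_r, i_{r+1}) = (i_s, i_{s+1})$ when $\epsilon_r = \epsilon_s$ and $(i_r, i_{r+1}) = (i_{s+1}, i_s)$ when $\epsilon_r \ne \epsilon_s$. Let $c(\pi, \epsilon)$ denote the number of equivalence classes on the cyclic set $\{1, \ldots, 2k\}$ (with $2k+1 \equiv 1$) induced by these identifications; then
\[
\#\{\bi \in [n]^{2k} : L_X\text{-partition} \ge \pi \text{ and } L_Y\text{-partition} \ge \pi\} = n^{c(\pi, \epsilon)}.
\]
A short inclusion-exclusion together with Lemma~\ref{lem:part} (each strictly coarser $\sigma > \pi$ has $|\sigma| < k$, contributing only $O(n^k)$) then yields $\# [\pi]_X \cap [\pi]_Y = n^{c(\pi, \epsilon)} + O(n^k)$.

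The heart of the proof is the combinatorial claim $c(\pi, \epsilon) \le k + 1$, with equality iff $\pi \in NC_2(2k)$ and $\epsilon_r \ne \epsilon_s$ for every pair $\{r, s\} \in \pi$. For non-crossing $\pi$ I would proceed by induction using the good-block structure already highlighted in the paper: given a good block $\{r, r+1\}$, if $\epsilon_r \ne \epsilon_{r+1}$, the constraint reduces to $i_r = i_{r+2}$ with $i_{r+1}$ free, contributing one new class on top of the reduced problem on $2k-2$ indices; whereas if $\epsilon_r = \epsilon_{r+1}$, the constraint collapses $i_r = i_{r+1} = i_{r+2}$ and a class is lost. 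For a general (possibly crossing) $\pi$, where no good block need be available, I would invoke the Euler-characteristic viewpoint: view the data as gluing the boundary edges of a $2k$-gon pairwise according to $\pi$, with each gluing being orientation-reversing or orientation-preserving as dictated by $\epsilon$. The resulting closed surface has $V = c(\pi, \epsilon)$, $E = k$, $F = 1$, so $\chi = c(\pi, \epsilon) - k + 1 \le 2$, yielding $c(\pi, \epsilon) \le k + 1$. Equality forces $\chi = 2$, i.e.\ the sphere, which requires both genus $0$ (equivalent to $\pi$ being non-crossing) and orientability (equivalent to every pair having $\epsilon_r \ne \epsilon_s$).

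Putting the pieces together and dividing by $n^{1+k}$, the limit is $1$ precisely when $c(\pi, \epsilon) = k+1$, which occurs exactly when $\pi \in NC_2(2k)$ with $\epsilon_r \ne \epsilon_s$ throughout; this matches $\prod_{\{r,s\} \in \pi}(1 - \delta_{\epsilon_r, \epsilon_s}) = 1$ in that case, and $0$ otherwise. The principal obstacle is the combinatorial bound in the crossing case, where the good-block induction breaks down and the surface-topology/Euler-characteristic perspective is essential; one could in principle replace it by a more elementary but less transparent argument directly counting redundancies among the merger relations.
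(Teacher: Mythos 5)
Your proof is correct, and for the hardest step it takes a genuinely different route from the paper. The opening move --- using injectivity to convert the simultaneous $L_X$- and $L_Y$-constraints on a pair $\{r,s\}\in\pi$ into the index identifications $(i_r,i_{r+1})=(i_s,i_{s+1})$ or $(i_{s+1},i_s)$ according to whether $\epsilon_r=\epsilon_s$ --- is exactly the paper's starting point, and your treatment of non-crossing $\pi$ by peeling off good blocks is the same induction the paper runs. The divergence is in the crossing case: the paper argues elementarily, taking the smallest secondary element $s$ with partner $r$, observing that $i_r$, $i_s$ (and $i_{r+1}$) are all generating, so that either choice of symbols yields a killing constraint between two generating indices unless $\{r,s\}$ is a good block, and then iterating until the crossing structure leaves no good block. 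You instead invoke the genus expansion: gluing the boundary edges of a $2k$-gon in pairs according to $(\pi,\epsilon)$ produces a closed surface with $V=c(\pi,\epsilon)$, $E=k$, $F=1$, so $\chi=c(\pi,\epsilon)-k+1\le 2$, with equality only for the sphere, which forces orientability (every gluing head-to-tail, i.e.\ $\epsilon_r\ne\epsilon_s$ throughout) and genus zero (i.e.\ $\pi$ non-crossing). This gives a uniform bound covering crossing and non-crossing $\pi$ simultaneously (making your separate good-block induction strictly speaking redundant), at the cost of importing surface topology where the paper stays purely combinatorial. A point in your favour: your inclusion--exclusion cleanly separates ``induced partition equal to $\pi$'' from ``induced partition at least $\pi$,'' so that $\#\left([\pi]_X\cap[\pi]_Y\right)=n^{c(\pi,\epsilon)}+O(n^k)$; the paper glosses over this when it asserts $\#\left([\pi]_X\cap[\pi]_Y\right)=n^{k+1}$ exactly. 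Both arguments are valid and reach the stated dichotomy.
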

\begin{proof}
    Suppose that $\pi \in NC_2(2k)$. Then $\bi \in [\pi]_X \cap [\pi]_Y$ implies that for all $\{r, s\} \in \pi$, we have
    \begin{align*}
        L_X^{\epsilon_r}(i_r, i_{r + 1}) &= L_X^{\epsilon_s}(i_s, i_{s + 1}), \\
        L_Y^{\epsilon_r}(i_r, i_{r + 1}) &= L_Y^{\epsilon_s}(i_s, i_{s + 1}).
    \end{align*}
    We have two possibilities:
    \begin{enumerate}
        \item[(a)] $\epsilon_r = \epsilon_s$. In this case, we conclude by injectivity that $i_r = i_s$ and $i_{r + 1} = i_{s + 1}$.
        \item[(b)] $\epsilon_r \ne \epsilon_s$. In this case, we conclude that $i_r = i_{s + 1}$ and $i_{r + 1} = i_s$.
    \end{enumerate}
    Since $\pi$ is a non-crossing pair-partition, it has a good block of the form $\{u, u + 1\}$. In the case $\epsilon_u = \epsilon_{u + 1}$, we get the constraint $i_u = i_{u + 1} = i_{u + 2}$. In the case $\epsilon_u \ne \epsilon_{u + 1}$, we get the constraint $i_u = i_{u + 2}$. In the former case, we cannot choose the generating index $i_{u + 1}$ freely, i.e. we have a killing constraint; this means that the asymptotic contribution from $\pi$ is zero. On the other hand, in the second case, $i_{u + 1}$ is a free variable. If we remove the block $\{u, u + 1\}$ from $\pi$, then in the remaining partition of $[2k]\setminus\{u, u + 1\}$ there is another good block of adjacent elements. Apply the above argument again to that block. It is clear that we can have a potentially nonzero contribution only if $\epsilon_r \ne \epsilon_s$ for all $\{r, s\} \in \pi$. In the latter case, we get exactly $k + 1$ free variables, which shows that $\# [\pi]_X \cap [\pi]_Y \sim n^{k + 1}$. This means that
    \begin{equation*}
        \frac{\# [\pi]_X \cap [\pi]_Y}{n^{1 + k}} \xrightarrow{n \rightarrow \infty} \prod_{\{r, s\} \in \pi} (1 - \delta_{\epsilon_r, \epsilon_s})
    \end{equation*}
    if $\pi$ is a non-crossing pair-partition.

    Now suppose that $\pi$ is a crossing pair-partition. Pick $\{r, s\} \in \pi$ with the property that $s$ is the smallest secondary element. Then, as before, we have the contingencies (a) and (b). By the definition of $s$, we must have that $s - 1 \ge 1$ is primary and so $i_s$ is generating. Similarly, $i_r$ is also generating, because either $r = 1$ or $r - 1$ is primary (as $r < s$). Thus, in the case $\epsilon_r = \epsilon_s$, the constraint $i_r = i_s$ is a killing constraint. So we assume on the contrary. Then, since both $i_{r+1}$ and $i_s$ are generating, we have a killing constraint unless $s = r + 1$. If the latter is the case, we remove the good block $\{r, r + 1\}$ from $\pi$ and apply the above argument again. As $\pi$ is non-crossing, sooner or later we will end up with a partition having no good blocks and hence a nontrivial killing constraint. This means that
    \begin{equation*}
       \frac{\# [\pi]_X \cap [\pi]_Y}{n^{1 + k}} \xrightarrow{n \rightarrow \infty} 0 
   \end{equation*}
if $\pi$ is a crossing pair-partition.
\end{proof}

\begin{cor}\label{cor:toephank}
    The nonsymmetric Toeplitz and the Hankel link-functions are jointly circular.
\end{cor}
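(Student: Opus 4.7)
The plan is to apply Lemma~\ref{lem:suff} directly, which reduces the task to verifying two things for the link functions $L_X(i,j) = i - j$ and $L_Y(i,j) = i + j$: (i) the joint map $(i, j) \mapsto (i - j, i + j)$ is injective on $\Z_+^2$, and (ii) Assumption~\ref{assm:propB} holds, namely $\max\{\Delta_{L_X}, \Delta_{L_Y}\} < \infty$.

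For injectivity, I would observe the elementary fact that the linear change of variables $(i, j) \mapsto (i - j, i + j)$ is invertible over $\Z$: given $(a, b) = (i - j, i + j)$, one recovers $i = (a + b)/2$ and $j = (b - a)/2$. Thus if $(i - j, i + j) = (i' - j', i' + j')$, then $i = i'$ and $j = j'$, which establishes injectivity. (The only subtlety worth a sentence is that the inversion stays inside $\Z$ here because $a + b$ and $b - a$ are either both even or both odd depending on the parity of $i + j$, but we only need set-theoretic injectivity, so this is a non-issue.)

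For the regularity assumption, I would note that for $L_X(i, j) = i - j$, fixing either coordinate $j$ (resp.\ $i$) and a target value $t$ determines the other coordinate uniquely via $i = t + j$ (resp.\ $j = i - t$). Hence the counts in \eqref{eq:propB} are bounded by $1$, giving $\Delta_{L_X} = 1$. An identical argument with $L_Y(i, j) = i + j$ (where fixing one coordinate and $t$ forces the other via $i = t - j$ or $j = t - i$) yields $\Delta_{L_Y} = 1$. In particular, $\max\{\Delta_{L_X}, \Delta_{L_Y}\} = 1 < \infty$.

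With both hypotheses of Lemma~\ref{lem:suff} verified, joint circularity of $L_X$ and $L_Y$ follows immediately. There is no real obstacle here; the corollary is essentially a one-line check, and the entire content of the result has already been packaged into Lemma~\ref{lem:suff}. If anything, the only point worth being careful about is making sure the injectivity is phrased over the full index set used in the definition of $[\pi]_L$ (pairs of positive integers), but since the algebraic inversion above works identically on $\Z_+^2$, no extra work is needed.
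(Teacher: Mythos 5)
Your proposal is correct and follows exactly the paper's route: the corollary is obtained by invoking Lemma~\ref{lem:suff}, with the paper's own proof being the one-line observation that $(i,j) \mapsto (i-j, i+j)$ is injective (the fact that $\Delta_{L_X} = \Delta_{L_Y} = 1$ is noted in the paper just after Assumption~\ref{assm:propB} rather than inside the proof). Your slightly more explicit verification of both hypotheses is fine and adds nothing that would change the argument.
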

\begin{proof}
    Clearly, the map $(i, j) \mapsto (i - j, i + j)$ is injective.
\end{proof}
\begin{lemma}\label{lem:compatibility}
The nonsymmetric Toeplitz and the Hankel link-functions are compatible.
\end{lemma}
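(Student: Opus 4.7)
The plan is to analyze $[\pi]_X \cap [\pi']_Y$ as the set of integer points in $[n]^{2k}$ lying on a linear subspace of $\R^{2k}$, and bound its size by a rank estimate. A tuple $\bi = (i_1, \ldots, i_{2k}) \in [\pi]_X \cap [\pi']_Y$ must satisfy $k$ Toeplitz equations
\[
i_r - i_{r+1} = \sigma_{rs}(i_s - i_{s+1}), \qquad \{r, s\} \in \pi,
\]
where $\sigma_{rs} = +1$ when $\epsilon_r = \epsilon_s$ and $\sigma_{rs} = -1$ otherwise, together with $k$ Hankel equations
\[
i_a + i_{a+1} = i_b + i_{b+1}, \qquad \{a, b\} \in \pi',
\]
independent of the $\epsilon$'s since $L_Y$ is symmetric in its arguments. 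Showing $\#[\pi]_X \cap [\pi']_Y = o(n^{1+k})$ reduces to showing that this combined $2k \times 2k$ homogeneous linear system has rank at least $k$ whenever $\pi \ne \pi'$, because the solution space then has dimension at most $k$ and contains $O(n^k)$ lattice points in $[n]^{2k}$.

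I would first compute the ranks of the two subsystems individually. A direct computation on the rows shows that the Toeplitz subsystem has rank exactly $k$, unless $\sigma_{rs} = -1$ for every $\{r,s\} \in \pi$, in which case the constant row vector on the $v_l = i_l - i_{l+1}$ coordinates provides a nontrivial row dependency and the rank drops to $k - 1$. Symmetrically, the Hankel subsystem has rank exactly $k$, unless every pair $\{a,b\} \in \pi'$ has $|a - b|$ odd, in which case the alternating-sign vector on the $u_l = i_l + i_{l+1}$ coordinates gives a nontrivial row dependency and the rank drops to $k - 1$. Hence the combined rank is automatically at least $k - 1$, and every case except the borderline one, in which both subsystems drop to rank $k - 1$ simultaneously, immediately yields the desired bound.

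The borderline case is the heart of the argument. There, both subsystems cut out $(k+1)$-dimensional solution spaces, and a combined rank of only $k - 1$ requires the two solution spaces to coincide as subspaces of $\R^{2k}$. The main obstacle is to show that this coincidence forces $\pi = \pi'$. Under all $\sigma_{rs} = -1$ the Toeplitz solution space is cut out by $i_r + i_s = i_{r+1} + i_{s+1}$ for $\{r,s\} \in \pi$, while the Hankel solution space is cut out by $i_a + i_{a+1} = i_b + i_{b+1}$ for $\{a,b\} \in \pi'$; though both are families of sum-equations, the pairings of summands are organized differently, and I would extract $\pi = \pi'$ by reading off the pair structure from the solution space using the parity and sign constraints that characterize the borderline case. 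A cleaner alternative that I would try first is a reduction to the compatibility result for the symmetric Toeplitz and Hankel pair proved in \cite{bose2014bulk}: every non-symmetric Toeplitz constraint implies its symmetric version $|i_r - i_{r+1}| = |i_s - i_{s+1}|$, so $[\pi]_X \cap [\pi']_Y$ is contained in the union, over all coarsenings $\pi^{\mathrm{sym}} \ge \pi$, of the corresponding sets for the symmetric Toeplitz link function intersected with $[\pi']_Y$; the pair-partition term ($\pi^{\mathrm{sym}} = \pi \ne \pi'$) is handled by \cite{bose2014bulk}, and non-pair coarsenings contribute only $O(n^{1 + |\pi^{\mathrm{sym}}|}) = o(n^{1+k})$ by Lemma~\ref{lem:part}.
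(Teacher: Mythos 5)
Your proposal actually contains two routes, and they fare differently. The linear-algebra route is attractive and your rank computations for the two subsystems are correct (the Toeplitz rows drop to rank $k-1$ exactly when every block of $\pi$ carries opposite symbols, because then the all-ones vector in the $v_l$-coordinates lies in the row space and is annihilated by the cyclic difference map; the Hankel rows drop to rank $k-1$ exactly when every block of $\pi'$ pairs indices of opposite parity, via the alternating vector). But the crux — showing that in the doubly degenerate case the two $(k-1)$-dimensional row spaces can coincide only when $\pi = \pi'$ — is precisely where you stop and say you ``would extract'' the conclusion; as written this is a genuine gap, and it is exactly the case that carries the weight of the lemma (for instance, it is the case that actually occurs for matching non-crossing $\pi=\pi'$ with alternating symbols, where the count is $n^{k+1}$). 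Your second route, by contrast, is complete: every $\bi\in[\pi]_X$ satisfies $|i_r-i_{r+1}|=|i_s-i_{s+1}|$ on blocks of $\pi$, so its symmetric-Toeplitz partition is a coarsening of $\pi$; the unique pair-partition coarsening is $\pi$ itself and is handled by the compatibility lemma of \cite{bose2014bulk}, while strict coarsenings have at most $k-1$ blocks and contribute $O(n^{k})$ by Lemma~\ref{lem:part}. This is a valid proof and genuinely different from the paper's, which runs a self-contained induction on ``killing constraints'': it locates the smallest secondary element, eliminates $i_s$ by adding the Toeplitz and Hankel relations as in \eqref{eq:is-elim}, and chases the resulting identity back to a generating element. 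What the paper's argument buys is independence from \cite{bose2014bulk} (so that the semicircular corollary is re-derived rather than partially assumed) and a template that generalises to arbitrary link functions in Section~\ref{sec:gen}, where no symmetric surrogate is available; what your reduction buys is brevity for this specific Toeplitz--Hankel pair. If you intend the linear-algebra route to stand on its own, you must supply the borderline-case argument.
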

\begin{proof}
   Let us take $L_X(i, j) = i - j$ and $L_Y(i, j) = i + j$. Suppose that $\pi \ne \pi'$ are pair-partitions. Then the link-functions give us $2k$ constraints in total. We will show that under these $2k$ constraints, there can be at most $k$ elements that can be chosen freely.

    Consider the primary elements in $\pi$ and $\pi'$. Suppose these are \emph{not} the same for the two partitions. Consider the first element $r$ that is primary in $\pi$ but secondary in $\pi'$. Then $i_{r + 1}$ is generating according to $L_X$ but non-generating according to $L_Y$. Hence, choosing the previous generating indices according to $L_Y$ fixes $i_{r + 1}$. Since the previous generating indices agree for $\pi$ and $\pi'$, $i_{r + 1}$ can be determined up to boundedly many choices in terms of previous generating indices. This gives a nontrivial killing constraint.

    Now suppose that the primary elements are the same in both $\pi$ and $\pi'$. Then, since $\pi \ne \pi'$, there is an element $s$ that is secondary in both $\pi$ and $\pi'$ such that there are primary elements $r \ne r'$ with $\{r, s\} \in \pi$ and $\{r', s\} \in \pi'$. Let $s$ be the smallest element of this type. Without loss of generality, let us assume that $r' < r$. Then we have
    \begin{align*}
        (-1)^{1 - \delta_{\epsilon_r, \epsilon_s}} (i_r - i_{r + 1}) &= i_s - i_{s + 1}, \\
        i_{r'} + i_{r' + 1} &= i_s + i_{s + 1}.
    \end{align*}
    Adding the above two equations, we get
    \begin{equation}\label{eq:is-elim}
        2i_s = i_{r'} + i_{r' + 1} + (-1)^{1 - \delta_{\epsilon_r, \epsilon_s}} (i_r - i_{r + 1}).
    \end{equation}
    \textbf{Case I ($r = s - 1$):} Using \eqref{eq:is-elim}, we get that
    \begin{equation*}
        2i_{r + 1} = i_{r'} + i_{r' + 1} + (-1)^{1 - \delta_{\epsilon_r, \epsilon_s}} (i_r - i_{r + 1}),
    \end{equation*}
    which is a killing constraint. 
    \vskip5pt
    \noindent
    \textbf{Case II ($r < s - 1$):} If $s - 1$ is a primary element, then $i_s$ is generating, and we have from \eqref{eq:is-elim} a killing constraint. So we assume that $s - 1$ is secondary. Therefore there exists a primary element $r_1 < s - 1$ such that $\{r_1, s - 1\}$ is a block of both $\pi$ and $\pi'$. This gives, via injectivity of $(i, j) \mapsto (i - j, i + j)$,
    \[
        (i_{r_1}, i_{r_1 + 1}) = \begin{cases}
        (i_{s - 1}, i_{s}) & \text{if } \epsilon_{r_1} = \epsilon_{s - 1}, \\
        (i_{s}, i_{s - 1}) & \text{if } \epsilon_{r_1} \ne \epsilon_{s - 1}.
    \end{cases}
    \]
    In the former case, $i_{r_1 + 1} = i_s$, which, via \eqref{eq:is-elim}, leads to the relation
    \[
        2i_{r_1 + 1} = i_{r'} + i_{r' + 1} + (-1)^{1 - \delta_{\epsilon_r, \epsilon_s}} (i_r - i_{r + 1}).
    \]
    This is a killing constraint.
    In the latter case, $i_{r_1} = i_s$, which, via \eqref{eq:is-elim}, leads to
    \[
        2i_{r_1} = i_{r'} + i_{r' + 1} + (-1)^{1 - \delta_{\epsilon_r, \epsilon_s}} (i_r - i_{r + 1}).
    \]
    If $r_1 \le r + 1$, this gives a killing constraint for $i_{r + 1}$. If $r_1 > r + 1$ and $r_1 - 1$ is primary, then we again have a killing constraint. Finally, if $r_1 - 1$ is secondary, we apply the same argument as above with $r_1 - 1$ replacing the role of $s - 1$ to get a primary element $r_2$ such that $\{r_2, r_1 - 1\}$ is a block of both $\pi$ and $\pi'$, and so on. It is clear that by continuing this procedure we will eventually end up with some primary element $r_m, m \ge 2$, such that $\{r_m, r_{m - 1} - 1\}$ is a block of both $\pi$ and $\pi'$ and $r_m \le r + 1$. Then, no matter whether $\epsilon_{r_m}$ and $\epsilon_{r_{m - 1} - 1}$ are equal or not, we have some nontrivial killing constraint.
\end{proof}

Together, Corollary~\ref{cor:toephank} and Lemma~\ref{lem:compatibility} give us
our main result on nonsymmetric Toeplitz and Hankel matrices.
\begin{theorem}\label{thm:toephank}
    Under Assumption~\ref{assm:entries-sms}, if $L_X(i, j) = i - j$ and $L_Y(i, j) = i + j$, then $n^{-1/2}M_n$ converges in $*$-distribution to a circular variable in expected normalised trace.
\end{theorem}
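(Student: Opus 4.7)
The plan is to derive Theorem~\ref{thm:toephank} as a direct corollary of the general main theorem stated just before Section~\ref{sec:toephank}: I need only verify its three hypotheses (Assumption~\ref{assm:propB}, compatibility, joint circularity) for the specific link functions $L_X(i,j) = i-j$ and $L_Y(i,j) = i+j$. Assumption~\ref{assm:entries} is postulated in the statement, so nothing is needed there.

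First, I would verify Assumption~\ref{assm:propB}. For the non-symmetric Toeplitz link function, fixing $j$ and a value $t$, the equation $i - j = t$ has the unique solution $i = t+j$, and symmetrically when $i$ is fixed; hence $\Delta_{L_X} = 1$. An identical argument applied to $i+j = t$ gives $\Delta_{L_Y} = 1$. Joint circularity is already recorded as Corollary~\ref{cor:toephank}: the combined map $(i,j) \mapsto (i-j, i+j)$ is injective, since one recovers $i = \tfrac{(i-j)+(i+j)}{2}$ and $j = \tfrac{(i+j)-(i-j)}{2}$, so Lemma~\ref{lem:suff} applies. Compatibility is exactly Lemma~\ref{lem:compatibility}. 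With these three ingredients in hand, applying the general main theorem yields $*$-convergence of $n^{-1/2}M_n$ to a circular variable, completing the proof.

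The only substantive work hidden in this chain of implications sits in the proof of Lemma~\ref{lem:compatibility}, and that is where the main obstacle really lives. One must show that for any two \emph{distinct} pair partitions $\pi, \pi' \in \P_2(2k)$, the $2k$ affine constraints imposed on the generating indices by $L_X$ along $\pi$ and by $L_Y$ along $\pi'$ become over-determined: combining a $(-)$-equation from $L_X$ with a $(+)$-equation from $L_Y$ with the same secondary index $i_s$ and different primary partners yields a linear relation of the form $2i_s = \text{(something in other generators)}$, which either fixes $i_s$ or, via chains of secondary indices, eventually propagates to fix a generator and thereby produces a killing constraint. Delineating this casework by whether primary elements coincide and by the parity of $s - r$ is where all of the real effort goes; everything else in the proof of Theorem~\ref{thm:toephank} is just assembly.
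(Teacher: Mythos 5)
Your proposal is correct and follows exactly the paper's own route: the paper proves Theorem~\ref{thm:toephank} precisely by combining Corollary~\ref{cor:toephank} (joint circularity, via the injectivity of $(i,j)\mapsto(i-j,i+j)$ and Lemma~\ref{lem:suff}) with Lemma~\ref{lem:compatibility} (compatibility), noting $\Delta_{L_X}=\Delta_{L_Y}=1$, and then invoking the general theorem of Section~\ref{sec:main}. You also correctly locate the real work in the killing-constraint casework of Lemma~\ref{lem:compatibility}, which is exactly where the paper spends its effort.
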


\subsection{General link-functions}\label{sec:gen}
In the proof of Lemma~\ref{lem:compatibility}, we have used three facts about the nonsymmetric Toeplitz and the Hankel link-functions:
\begin{enumerate}
    \item[(i)] For both link-functions, one can determine any non-generating $i_s$, up to a bounded number of choices, from all previous generating $i_r, r < s$. This follows from the fact that these two link-functions satisfy Assumption~\ref{assm:propB} (via Lemma~\ref{lem:non-gen}).
    \item[(ii)] The map $(i, j) \mapsto (i - j, i + j)$ is injective.
    \item[(iii)] Equation \eqref{eq:is-elim} gives rise to killing constraints.
\end{enumerate}

In order to prove compatibility of two general link-functions $L_X$ and $L_Y$ along the lines of Lemma~\ref{lem:compatibility}, we need appropriate assumptions on them so as to have analogues of items (i) - (iii). Assumption~\ref{assm:propB} will give us (i). For (ii), we will assume the injectivity of the map $(i, j) \mapsto (L_X(i, j), L_Y(i, j))$, which we anyway need for joint circularity. Let us now look at the analogue of \eqref{eq:is-elim} in the general case.

As in the proof of Lemma~\ref{lem:compatibility}, consider the situation where $\pi$ and $\pi'$ have the same primary elements, and the elements $r, r', s$ encountered there, with $r \ne r'$ and $r, r' < s$. We then have the simultaneous equations
\begin{equation}\label{eq:gen-simultaneous-eq}
    L_X^{\epsilon_r}(i_r, i_{r + 1}) = L_X^{\epsilon_s}(i_s, i_{s + 1}), \quad L_Y^{\epsilon_{r'}}(i_{r'}, i_{r' + 1}) = L_Y^{\epsilon_s}(i_s, i_{s + 1}).
\end{equation}
Denote that map $(i, j) \mapsto (L_X(i, j), L_Y(i, j))$ by $G$. Let $\Ginv : \mathrm{range}(G) \rightarrow \Z_+^2$ denote the inverse of $G$. Write $\Ginv = (\Ginv_1, \Ginv_*)$. Then from \eqref{eq:gen-simultaneous-eq}, we get that
\begin{equation}\label{eq:gen0}
    i_s = \Ginv_{\epsilon_s}(L_X^{\epsilon_r}(i_r, i_{r + 1}), L_Y^{\epsilon_{r'}}(i_{r'}, i_{r' + 1})).
\end{equation}
which is the general version of \eqref{eq:is-elim}. As in the proof of Lemma~\ref{lem:compatibility}, this gives rise to various types of constraints for different combinations of $r, r'$ and $s$. We need to make sure that these are killing constraints. 
\begin{lemma}\label{lem:g}
    Suppose that Assumption~\ref{assm:propB} holds. Let $\epsilon \in \{1, *\}$. Then the equation
    \begin{equation}\label{eq:g}
        \Ginv_{\epsilon}(u, v) = w
    \end{equation}
    has an at most bounded number of solutions in $u$ (resp. $v$) when $v, w$ (resp. $u, w$) are fixed.
\end{lemma}
\begin{proof}
    Suppose that $\epsilon = 1$ and $v, w$ are given (the other cases can be tackled similarly). Then for any solution $u$ of \eqref{eq:g}, note that $w' = \Ginv_*(u, v)$ satisfies
    \[
        L_X(w, w') = u \text{ and } L_Y(w, w') = v.
    \]
    As $v, w$ are given, by Assumption~\ref{assm:propB}, there are an at most bounded number of solutions in $w'$ of the equation $L_Y(w, w') = v$. This gives the desired result since $u$ must satisfy $u = L_X(w, w')$.
\end{proof}
\begin{lemma}\label{lem:g-gen}
Let $\epsilon, \epsilon', \epsilon'' \in \{1, *\}$. Consider the equation
\begin{equation}\label{eq:g-gen}
    m = \Ginv_\epsilon(L_X^{\epsilon'}(i, j), L_Y^{\epsilon''}(k, l)).
\end{equation}
Then fixing any four variables among $i, j, k, l, m$ constrains the remaining variable to take an at most bounded number of values.
\end{lemma}
\begin{proof}
    If $i, j, k, l$ are fixed, then obviously $m$ is determined. If $j, k, l, m$ are fixed, then, by Lemma~\ref{lem:g}, $L_X^{\epsilon'}(i, j)$ can take an at most bounded number of values. But this means, by Assumption~\ref{assm:propB}, that $i$ can take an at most bounded number of values. The other cases are similar.
\end{proof}
Lemma~\ref{lem:g-gen} will help us get killing constraints from \eqref{eq:gen0} for most values of $r, r'$ and $s$. However, in certain degenerate cases we need to make extra assumptions.
\begin{assumption}\label{assm:genlink2}
    Assume that the link-functions $L_X$ and $L_Y$ are such that the map $G$ taking $(i, j) \mapsto (L_X(i, j), L_Y(i, j))$ is injective. Let $\Ginv : \mathrm{range}(G) \rightarrow \Z_+^2$ denote the inverse of $G$. Write $\Ginv = (\Ginv_1, \Ginv_*)$. Let $\epsilon, \epsilon', \epsilon'' \in \{1, *\}$. Consider the following equations obtained from \eqref{eq:g-gen} by equating some of the variables.
\begin{equation}\label{eq:g-spl-1}
    j = \Ginv_{\epsilon}(L_X^{\epsilon'}(i, j), L_Y^{\epsilon''}(k, l)).
\end{equation}
\begin{equation}\label{eq:g-spl-2}
    j = \Ginv_{\epsilon}(L_X^{\epsilon'}(l, j), L_Y^{\epsilon''}(k, l)).
\end{equation}
\begin{equation}\label{eq:g-spl-3}
    m = \Ginv_{\epsilon}(L_X^{\epsilon'}(l, j), L_Y^{\epsilon''}(k, l)).
\end{equation}
\begin{equation}\label{eq:g-spl-4}
    m = \Ginv_{\epsilon}(L_X^{\epsilon'}(l, j), L_Y^{\epsilon''}(m, l)).
\end{equation}
\begin{equation}\label{eq:g-spl-5}
    m = \Ginv_{\epsilon}(L_X^{\epsilon'}(i, j), L_Y^{\epsilon''}(m, l)).
\end{equation}
\begin{equation}\label{eq:g-spl-6}
    m = \Ginv_{\epsilon}(L_X^{\epsilon'}(m, j), L_Y^{\epsilon''}(k, l)).
\end{equation}
\begin{equation}\label{eq:g-spl-7}
    m = \Ginv_{\epsilon}(L_X^{\epsilon'}(i, j), L_Y^{\epsilon''}(k, m)).
\end{equation}
\begin{equation}\label{eq:g-spl-8}
    l = \Ginv_{\epsilon}(L_X^{\epsilon'}(i, j), L_Y^{\epsilon''}(k, l)).
\end{equation}
\begin{equation}\label{eq:g-spl-9}
    l = \Ginv_{\epsilon}(L_X^{\epsilon'}(i, j), L_Y^{\epsilon''}(j, l)).
\end{equation}
\begin{equation}\label{eq:g-spl-10}
    m = \Ginv_{\epsilon}(L_X^{\epsilon'}(i, j), L_Y^{\epsilon''}(j, l)).
\end{equation}
\begin{equation}\label{eq:g-spl-11}
    m = \Ginv_{\epsilon}(L_X^{\epsilon'}(m, j), L_Y^{\epsilon''}(j, l)).
\end{equation}
\begin{equation}\label{eq:g-spl-12}
    m = \Ginv_{\epsilon}(L_X^{\epsilon'}(i, m), L_Y^{\epsilon''}(k, l)).
\end{equation}
Assume that Equations \eqref{eq:g-spl-1} and \eqref{eq:g-spl-2} (resp. Equations \eqref{eq:g-spl-8} and \eqref{eq:g-spl-9}) constrain $j$ (resp. $l$) to take an at most bounded number of values when the rest of variables are kept fixed. Assume also that each one among Equations \eqref{eq:g-spl-3}-\eqref{eq:g-spl-7} and \eqref{eq:g-spl-10}-\eqref{eq:g-spl-12} constrain each one among the variables $j$, $l$ and $m$ to take an at most bounded number of values when the rest of variables are kept fixed.
\end{assumption}

\begin{lemma}[Compatibility of general link-functions]\label{lem:comp-gen}
	Suppose that Assumptions~\ref{assm:propB} and \ref{assm:genlink2} hold. Then the link-functions $L_X$ and $L_Y$ are compatible.
\end{lemma}

\begin{proof}
We proceed as in the proof of Lemma~\ref{lem:compatibility}. When the primary elements are not the same in $\pi$ and $\pi'$, one can use the same argument as in the proof of Lemma~\ref{lem:compatibility}. Otherwise, we begin with \eqref{eq:gen0}. As before, let us consider the case $r' < r$. The other case can be tackled similarly. In Case I, i.e. when $r = s - 1$, \eqref{eq:gen0} leads to the relation
	\begin{equation}\label{eq:gen1}
        i_{r + 1} = \Ginv_{\epsilon_s}(L_X^{\epsilon_r}(i_r, i_{r + 1}), L_Y^{\epsilon_{r'}}(i_{r'}, i_{r' + 1})),
	\end{equation}
    which gives a killing constraint by Assumption~\ref{assm:genlink2} (more specifically, via \eqref{eq:g-spl-1} in general, and via \eqref{eq:g-spl-2} in the degenerate case $r'+ 1 = r$). Similarly, in Case II, i.e. when $r < s - 1$, we are led to relations of the form
	\begin{equation}\label{eq:gen2}
        i_{r_m + 1} = \Ginv_{\epsilon_s}(L_X^{\epsilon_r}(i_r, i_{r + 1}), L_Y^{\epsilon_{r'}}(i_{r'}, i_{r' + 1})),
	\end{equation}
	or
	\begin{equation}\label{eq:gen3}
        i_{r_m} = \Ginv_{\epsilon_s}(L_X^{\epsilon_r}(i_r, i_{r + 1}), L_Y^{\epsilon_{r'}}(i_{r'}, i_{r' + 1})),
	\end{equation}
    where $r_m$ is a primary element with $r_m \le r + 1$. Consider \eqref{eq:gen2} first. When all the indices involved are distinct, we get killing constraints by Lemma~\ref{lem:g-gen}. The degenerate cases where some of the indices are equal are covered by Assumption~\ref{assm:genlink2}: the case $r' + 1 = r, r_m + 1 \ne r'$ is covered by \eqref{eq:g-spl-3}, the case $r' + 1 = r, r_m + 1 = r'$ is covered by \eqref{eq:g-spl-4}, the case $r' + 1 < r, r_m + 1 = r'$ is covered by \eqref{eq:g-spl-5}, and the case $r_m + 1 = r$ is covered by \eqref{eq:g-spl-6}.

    Now consider \eqref{eq:gen3}. If all the indices involved are distinct then we get killing constraints by Lemma~\ref{lem:g-gen}. As for the degenerate cases, Assumption~\ref{assm:genlink2} again comes to our aid. The case $r_m = r + 1$ gives us \eqref{eq:gen1} which is a killing constraint as seen earlier. On the other hand, the case $r_m = r' + 1$ is covered by \eqref{eq:g-spl-7}. 

    Thus we end up with killing constraints in all possible scenarios and hence the proof is complete.
\end{proof}
Together, Lemmas~\ref{lem:suff} and \ref{lem:comp-gen} imply our main theorem for general link-functions.
\begin{theorem}[General link-functions]\label{thm:gen}
    Suppose that Assumptions~\ref{assm:entries-sms}, \ref{assm:propB}, and \ref{assm:genlink2} hold. Then $n^{-1/2}X_n \odot Y_n$ converges in $*$-distribution to a circular variable in expected normalised trace.
\end{theorem}
The following are a couple of examples where one can verify Assumptions~\ref{assm:propB} and \ref{assm:genlink2}:
\begin{enumerate}
    \item $L_X(i, j) = 2i + 7j$, $L_Y(i, j) = i + 4j$.
    \item $L_X(i, j) = i^2 + j$, $L_Y(i, j) = i + j^2$.
\end{enumerate}

An example of a pair of link-functions which do not satisfy Assumption~\ref{assm:genlink2} is $(L_X(i, j) = i - j, L_Y(i, j) = 2i - j)$. One can check that in this case $\Ginv(u, v) = (v - u, v - 2u)$. For $\epsilon = \epsilon' = \epsilon'' = 1$, the constraint \eqref{eq:g-spl-1} becomes
\[
    j = L_Y(k, l) - L_X(i, j) = (2k - l) - (i - j) = 2k - l - i + j.  
\]
As $j$ vanishes from the constraint altogether, fixing the other variables does not impose any constraints on $j$, thus violating Assumption~\ref{assm:genlink2}. This creates troubles in the proof of Lemma~\ref{lem:comp-gen}. For instance, in the case $\epsilon_r = \epsilon_{r'} = \epsilon_{s} = 1$, the relation \eqref{eq:gen1} gives us
\[
    i_{r + 1} = 2i_{r'} - i_{r' + 1} - i_{r} + i_{r + 1}, 
\]
i.e.
\begin{equation}\label{eq:ir-constraint}
    i_{r} = 2i_{r'} - i_{r' + 1}.
\end{equation}
For $r > r' + 1$, unless $i_r$ is generating, we do not necessarily have a killing constraint here, and additional work is required to find one. In this case, one may argue as follows: by Lemma~\ref{lem:non-gen}, considering the constraints coming from $L_X$ only, one can determine a non-generating $i_r$ up to a bounded number of choices from its predecessor generating indices (in fact, due to the linear nature of the constraints, one can explicitly solve for $i_r$ in terms of the previous generating indices). On the other hand, \eqref{eq:ir-constraint} gives a \emph{different} linear constraint on $i_r$. Together these two separate constraints on $i_r$ create a killing constraint between the predecessor generating indices of $i_r$.

We believe that the line of argument given above works for all pairs of linear link-functions $(L_X(i, j) = ai + bj + e, L_Y(i, j) = ci + dj + f)$, where $a, b, c, d, e, f$ are integers with $a, b, c, d$ nonzero and $ad \ne bc$, even if these link-functions do not satisfy some parts of Assumption~\ref{assm:genlink2}, and the arguments given in the proof of Lemma~\ref{lem:comp-gen} do not yield killing constraints in all of the cases considered. However, since writing down the constraints explicitly for general $a, b, c, d, e, f$ is rather cumbersome, we remain content with an informal discussion.  

\subsection{Almost sure convergence}\label{sec:main-asconv}
Earlier we have shown convergence of the $*$-moments in expected normalised trace. However, a stronger result can be established under Assumption~\ref{assm:entries}---the $*$-moments in normalised trace converge almost surely to the $*$-moments of a circular variable. The proof of this is essentially the same as the proof of Lemma B.1 of \cite{bose2014bulk}, modulo some minor details. On the other hand, under Assumption~\ref{assm:entries-sms}, we are only able to show in-probability convergence. This requires some new combinatorial arguments.

\begin{theorem}\label{thm:asconv}
    Suppose that Assumption~\ref{assm:propB} holds and the link-functions $L_X$ and $L_Y$ are compatible and jointly circular. Let $c$ be a circular variable in some $*$-probability space $(\A, \state)$.
    \begin{enumerate}
        \item[(a)] If the input random variables satisfy Assumption~\ref{assm:entries-sms}, then
            \[
                \frac{1}{n}\tr\, Q(n^{-1/2} M_n, n^{-1/2} M_n^*) \xrightarrow{P} \state(Q(c, c^*)) \text{ for any } Q \in \C\langle X, X^* \rangle.
            \]
        \item[(b)] If the input variables satisfy Assumption~\ref{assm:entries}, then almost surely,             \[
                \frac{1}{n}\tr\, Q(n^{-1/2} M_n, n^{-1/2} M_n^*) \rightarrow \state(Q(c, c^*)) \text{ for any } Q \in \C\langle X, X^* \rangle.
            \]
            As a consequence, almost surely, $n^{-1/2}M_n$ converges in $*$-distribution, as an element of the $*$-probability space $(\mathcal{M}_n(\C), \frac{1}{n}\tr)$, to a circular variable.
    \end{enumerate}
\end{theorem}

Note that in order to prove almost sure/in-probability convergence of $\frac{1}{n}\tr\, Q(n^{-1/2} M_n, n^{-1/2} M_n^*)$ to $\state(Q(c, c^*))$, where $Q \in \C\langle X, X^* \rangle$, it is enough to show convergence for monomials. 
Now, since we have already shown that
\[
    \E \frac{1}{n} \tr (n^{-1/2}M_n)^{\epsilon_1} \cdots (n^{-1/2}M_n)^{\epsilon_k} \rightarrow \state(c^{\epsilon_1} \cdots c^{\epsilon_k})
\]
for any $k \ge 1$ and $\epsilon_1, \ldots, \epsilon_k \in \{1, *\}$, in order to prove almost sure/in-probability convergence, it is enough to control centred moments of $\frac{1}{n} \tr (n^{-1/2}M_n)^{\epsilon_1} \cdots (n^{-1/2}M_n)^{\epsilon_k}$ suitably. For example, in case of in-probability convergence, it is enough to show that
\begin{equation}\label{eq:vanishing_second_moment}
    \E \bigg( \frac{1}{n} \tr (n^{-1/2}M_n)^{\epsilon_1} \cdots (n^{-1/2}M_n)^{\epsilon_k} - \E \frac{1}{n} \tr (n^{-1/2}M_n)^{\epsilon_1} \cdots (n^{-1/2}M_n)^{\epsilon_k} \bigg)^2 = o(1).
\end{equation}
On the other hand, for almost sure convergence, it is sufficient to show that
\[
    \sum_{n \ge 1} \E \bigg( \frac{1}{n} \tr (n^{-1/2}M_n)^{\epsilon_1} \cdots (n^{-1/2}M_n)^{\epsilon_k} - \E \frac{1}{n} \tr (n^{-1/2}M_n)^{\epsilon_1} \cdots (n^{-1/2}M_n)^{\epsilon_k} \bigg)^4 < \infty
\]
for any $k \ge 1$ and $\epsilon_1, \ldots, \epsilon_k \in \{1, *\}$, which can be established by showing that
\[
    \E \bigg( \frac{1}{n} \tr (n^{-1/2}M_n)^{\epsilon_1} \cdots (n^{-1/2}M_n)^{\epsilon_k} - \E \frac{1}{n} \tr (n^{-1/2}M_n)^{\epsilon_1} \cdots (n^{-1/2}M_n)^{\epsilon_k} \bigg)^4 = O(n^{-(1 + \delta)})
\]
for some $\delta > 0$.

To this end, we will establish bounds on the fourth centred moment of $\frac{1}{n} \tr (n^{-1/2}M_n)^{\epsilon_1} \cdots (n^{-1/2}M_n)^{\epsilon_k}$ under Assumptions~\ref{assm:entries} and \ref{assm:entries-sms}.
\begin{lemma}\label{lem:fourth_moment_bound}
    Suppose that Assumption~\ref{assm:propB} holds and the link-functions $L_X$ and $L_Y$ are compatible and jointly circular. Let $k \ge 1$ and $\epsilon_1, \ldots, \epsilon_k \in \{1, *\}$.
    \begin{enumerate}
        \item[(a)] If the input random variables satisfy Assumption~\ref{assm:entries-sms}, then
        \[
            \E \bigg( \frac{1}{n} \tr (n^{-1/2}M_n)^{\epsilon_1} \cdots (n^{-1/2}M_n)^{\epsilon_k} - \E \frac{1}{n} \tr (n^{-1/2}M_n)^{\epsilon_1} \cdots (n^{-1/2}M_n)^{\epsilon_k} \bigg)^4 = O(n^{-1}).
        \]
        \item[(b)] If the input variables satisfy Assumption~\ref{assm:entries}, then
        \[
            \E \bigg( \frac{1}{n} \tr (n^{-1/2}M_n)^{\epsilon_1} \cdots (n^{-1/2}M_n)^{\epsilon_k} - \E \frac{1}{n} \tr (n^{-1/2}M_n)^{\epsilon_1} \cdots (n^{-1/2}M_n)^{\epsilon_k} \bigg)^4 = O(n^{-(1 + \delta)})
        \]
        for some $\delta > 0$.
    \end{enumerate}
\end{lemma}
\begin{remark}
For proving in-probability convergence under Assumption~\ref{assm:entries-sms}, it is simpler to directly show \eqref{eq:vanishing_second_moment}, but we choose to control the fourth moment instead because our arguments illustrate where we get the extra $\delta$ under Assumption~\ref{assm:entries}.
\end{remark}

Before we prove Lemma~\ref{lem:fourth_moment_bound}, we need to discuss the concept of matching of multiple multi-indices. Let $\bi_\ell$ denote the multi-index $(i^{(\ell)}_1, \ldots, i^{(\ell)}_k)$. Fix $\epsilon_1, \ldots, \epsilon_k$. We say that the multi-indices $(\bi_1, \ldots, \bi_t)$ are jointly $L$-matched if every $L$-value $L^{\epsilon_j}(i^{(\ell)}_j, i^{(\ell)}_{j + 1})$ occurs at least twice across all the multi-indices. If every $L$-value occurs exactly twice across all the multi-indices, then we say that the multi-indices are jointly $L$-matched in pairs.  We say that $(\bi_1, \ldots, \bi_t)$ are across $L$-matched, if every multi-index $\bi_\ell$ has at least one $L$-value that appears in at least one of the other multi-indices.

\begin{proof}[Proof of Lemma~\ref{lem:fourth_moment_bound}]
As in \eqref{eq:expansion}, we can use multi-indices to write
\[
    \frac{1}{n} \tr (n^{-1/2}M_n)^{\epsilon_1} \cdots (n^{-1/2}M_n)^{\epsilon_k} - \E \frac{1}{n} \tr (n^{-1/2}M_n)^{\epsilon_1} \cdots (n^{-1/2}M_n)^{\epsilon_k} = \frac{1}{n^{1 + \frac{k}{2}}}\sum_{\bi} (x_{\bi}^{\epsilon} y_{\bi}^{\epsilon} - \E x_{\bi}^{\epsilon} \E y_{\bi}^{\epsilon}).
\]
Therefore
\begin{align} \nonumber
    \E \bigg( \frac{1}{n} \tr (n^{-1/2}M_n)^{\epsilon_1} \cdots (n^{-1/2}M_n)^{\epsilon_k} - &\E \frac{1}{n} \tr (n^{-1/2}M_n)^{\epsilon_1} \cdots (n^{-1/2}M_n)^{\epsilon_k} \bigg)^4 \\ \nonumber
                                                                                             &= \frac{1}{n^{2k + 4}}\E \sum_{\bi_1, \bi_2, \bi_3, \bi_4 \in [n]^k} \prod_{\ell = 1}^4  (x_{\bi_\ell}^{\epsilon} y_{\bi_\ell}^{\epsilon} - \E x_{\bi_\ell}^{\epsilon} \E y_{\bi_\ell}^{\epsilon}) \\ \label{eq:fourth-moment}
                                                                                             &= \frac{1}{n^{2k + 4}}\sum_{\bi_1, \bi_2, \bi_3, \bi_4 \in [n]^k} \E \prod_{\ell = 1}^4  (x_{\bi_\ell}^{\epsilon} y_{\bi_\ell}^{\epsilon} - \E x_{\bi_\ell}^{\epsilon} \E y_{\bi_\ell}^{\epsilon}).
\end{align}
The strategy now is to split the sum $\sum_{\bi_1, \bi_2, \bi_3, \bi_4 \in [n]^k} \E \prod_{\ell = 1}^4  (x_{\bi_\ell}^{\epsilon} y_{\bi_\ell}^{\epsilon} - \E x_{\bi_\ell}^{\epsilon} \E y_{\bi_\ell}^{\epsilon})$ into a bounded number of sub-sums according to the matching properties of the multi-indices $(\bi_1, \bi_2, \bi_3, \bi_4)$, and show that each of these sub-sums are $O(n^{2k + 3 - \delta})$, where $\delta \ge 0$ (we show that under Assumption~\ref{assm:entries}, one can take $\delta > 0$ for all of these sub-sums, whereas under Assumption~\ref{assm:entries-sms}, $\delta = 0$ for certain sub-sums). This is done by showing that (i) the summands $\E \prod_{\ell = 1}^4  (x_{\bi_\ell}^{\epsilon} y_{\bi_\ell}^{\epsilon} - \E x_{\bi_\ell}^{\epsilon} \E y_{\bi_\ell}^{\epsilon})$ are exactly $0$ for certain sub-sums, rendering those sub-sums exactly $0$, and (ii) for the remaining sub-sums, the summands $\E \prod_{\ell = 1}^4  (x_{\bi_\ell}^{\epsilon} y_{\bi_\ell}^{\epsilon} - \E x_{\bi_\ell}^{\epsilon} \E y_{\bi_\ell}^{\epsilon})$ are bounded by some constant that depends only on $k$, and the number of such summands is $O(n^{2k + 3 - \delta})$.

We first carry out the above programme under Assumption~\ref{assm:entries-sms} to prove part (a).

\textbf{Case 1 ($(\bi_1, \bi_2, \bi_3, \bi_4)$ is neither jointly $L_X$-matched, nor jointly $L_Y$-matched).} Note that if $(\bi_1, \bi_2, \bi_3, \bi_4)$ is not jointly $L_X$-matched, then one of the $x_{\bi_\ell}^{\epsilon}$'s, say $x_{\bi_h}^{\epsilon}$, will have a single occurrence of an input random variable making $\E x_{\bi_h}^{\epsilon} = 0$ (by the admissibility of the SMS $\{x_{\alpha}\}_{\alpha \in \Z^d}$). This would mean that
\[
    \E \prod_{\ell = 1}^4  (x_{\bi_\ell}^{\epsilon} y_{\bi_\ell}^{\epsilon} - \E x_{\bi_\ell}^{\epsilon} \E y_{\bi_\ell}^{\epsilon}) = \E x_{\bi_h}^{\epsilon} y_{\bi_h}^{\epsilon} \prod_{\ell \ne h}  (x_{\bi_\ell}^{\epsilon} y_{\bi_\ell}^{\epsilon} - \E x_{\bi_\ell}^{\epsilon} \E y_{\bi_\ell}^{\epsilon}) = 0.
\]
Here we are using the admissibility $\{x_{\alpha}\}_{\alpha \in \Z^d}$, and the independence of $\{x_{\alpha}\}_{\alpha\in\Z^d}$ and $\{y_{\beta}\}_{\beta \in \Z^{d'}}$. The same conclusion can be made if $(\bi_1, \bi_2, \bi_3, \bi_4)$ is not jointly $L_Y$-matched.

\textbf{Case 2 ($(\bi_1, \bi_2, \bi_3, \bi_4)$ is both jointly $L_X$-matched and jointly $L_Y$-matched, but neither across $L_X$-matched, nor across $L_Y$-matched).} In this case, some multi-index, say $\bi_{h_1}$, is only self-$L_X$-matched, i.e. none of its $L_X$-values appear among the $L_X$-values of any of the other multi-indices. Similarly, some multi-index, say $\bi_{h_2}$ is only self-$L_Y$-matched. The case $h_1 \ne h_2$ can be tackled in the exact same manner as in the proof of Lemma~B.1 of \cite{bose2014bulk}. The idea is that by Assumption~\ref{assm:entries-sms}, the entry variables have uniformly bounded moments of all orders, so by an application of H\"{o}lder's inequality, $\E\prod_{\ell = 1}^4 (x_{\bi_\ell}^{\epsilon} y_{\bi_\ell}^{\epsilon} - \E x_{\bi_\ell}^{\epsilon} \E y_{\bi_\ell}^{\epsilon})$ is bounded by some constant that depends only on $k$, and not $n$. Thus it is enough to bound the possible number of quadruples of multi-indices $(\bi_1, \bi_2, \bi_3, \bi_4)$ obeying the condition $h_1 \ne h_2$. In \cite{bose2014bulk}, an $O(n^{2k + 3 - \delta})$ bound for some $\delta > 0$ is established via a case-by-case counting argument.

Now consider the case $h_1 = h_2$. Without loss of generality, we assume that $h_1 = h_2 = 1$. Then $\bi_1$ can be chosen in $O(n^{1 + \lfloor\frac{k}{2}\rfloor})$ ways. The remaining multi-indices are jointly $L_X$-matched as well as jointly $L_Y$-matched. We now employ a case-by-case analysis. 
\begin{enumerate}
    \item Consider the case where $(\bi_2, \bi_3, \bi_4)$ is either across $L_X$-matched or across $L_Y$-matched. In this case, Lemma~A.1 of \cite{bose2014bulk} tells us that $(\bi_2, \bi_3, \bi_4)$ can be chosen in $O(n^{\lfloor \frac{3k}{2}\rfloor + 2})$ ways. Thus the total number of choices for $(\bi_1, \bi_2, \bi_3, \bi_4)$ is $O(n^{\lfloor\frac{k}{2}\rfloor + \lfloor \frac{3k}{2}\rfloor + 3})$ which is $O(n^{2k + 3})$ when $k$ is even. In fact, by considering the case where the multi-indices are jointly $L_X$-matched in pairs as well as jointly $L_Y$-matched in pairs, it is not hard to see that the number of choices is exactly of order $n^{2k + 3}$. Thus, in this case, we cannot get a $O(n^{2k + 3 - \delta})$ bound for some $\delta > 0$.

    \item Now consider the case where $(\bi_2, \bi_3, \bi_4)$ is neither across $L_X$-matched nor across $L_Y$-matched. Then some $\bi_{h_3}$ is only self-$L_X$-matched, and some $\bi_{h_4}$ is only self-$L_Y$-matched. Again, we may have two contingencies: $h_3 = h_4$ or $h_3 \ne h_4$. If $h_3 \ne h_4$, then we can proceed like the $h_1 \ne h_2$ case and ultimately prove an $O(n^{2k + 3 - \delta})$ bound. So let us consider the case $h_3 = h_4$. Assume, without loss of generality, that $h_3 = h_4 = 2$. Then $\bi_2$ can be chosen in $O(n^{1 + \lfloor\frac{k}{2}\rfloor})$ ways. 
        \begin{enumerate}
            \item[i.] If $(\bi_3, \bi_4)$ is either across $L_X$-matched or across $L_Y$-matched, then by Lemma~A.1 of \cite{bose2014bulk}, $(\bi_2, \bi_3)$ can be chosen in $O(n^{k + 1})$ ways. Thus, in this case, the total number of choices for  $(\bi_1, \bi_2, \bi_3, \bi_4)$ is
                \[
                    \underbrace{O(n^{1 + \lfloor\frac{k}{2}\rfloor})}_{\bi_1} \times \underbrace{O(n^{1 + \lfloor\frac{k}{2}\rfloor})}_{\bi_2} \times \underbrace{O(n^{k + 1})}_{(\bi_3, \bi_4)} = O(n^{2k + 3}).
                \]
                Again, the upper bound can be attained by considering multi-indices that are jointly $L_X$-matched in pairs as well as jointly $L_Y$-matched in pairs.

            \item[ii.] Otherwise, $(\bi_3, \bi_4)$ is neither across $L_X$-matched nor across $L_Y$-matched. In this case, all the multi-indices are self-$L_X$-matched as well as self-$L_Y$-matched. If any one of these multi-indices has an $L_X$-value or $L_Y$-value repeated at least thrice, then that multi-index can be chosen in at most $O(n^{\lfloor \frac{k + 1}{2} \rfloor})$ ways (this bound can also be attained). In that case, the total number of choices for  $(\bi_1, \bi_2, \bi_3, \bi_4)$ is 
                \[
                    O(n^{\lfloor \frac{k + 1}{2} \rfloor}) \times O((n^{1 + \lfloor\frac{k}{2}\rfloor})^3) = O(n^{2k + 3}).
                \]
                However, if each of the multi-indices is self-matched in pairs with respect to both $L_X$ and $L_Y$, then the total number of choices is $O((n^{1 + \lfloor\frac{k}{2}\rfloor})^4)= O(n^{2k + 4})$. This would have led to a $O(1)$ bound on \eqref{eq:fourth-moment}, were it not for the fact that we can do an exact computation in this case. As each of the multi-indices is self-matched in pairs with respect to both $L_X$ and $L_Y$, we get by strong multiplicativity that $\E x_{\bi_\ell}^\epsilon = 1 = \E x_{\bi_\ell}^\epsilon$ for any $\ell$. This means that
                \[
                    \E \prod_{\ell = 1}^4  (x_{\bi_\ell}^{\epsilon} y_{\bi_\ell}^{\epsilon} - \E x_{\bi_\ell}^{\epsilon} \E y_{\bi_\ell}^{\epsilon}) = \E \prod_{\ell = 1}^4  (x_{\bi_\ell}^{\epsilon} y_{\bi_\ell}^{\epsilon} - 1).
                \]
                It is easy to see using strong multiplicativity and the independence of the $x_{\alpha}$'s and $y_{\beta}$'s that the product on the right-hand side vanishes when we expand it.
        \end{enumerate}
\end{enumerate}
We thus see that under Assumption~\ref{assm:entries-sms}, we only have an $O(n^{2k + 3})$ upper bound on the number of multi-indices in the $h_1 = h_2$ case.

\textbf{Case 3 ($(\bi_1, \bi_2, \bi_3, \bi_4)$ is both jointly $L_X$-matched and jointly $L_Y$-matched, and also either across $L_X$-matched or across $L_Y$-matched).} In this case, by Equation (B.1) of \cite{bose2014bulk}, we get that $(\bi_1, \bi_2, \bi_3, \bi_4)$ can be chosen in $O(n^{2k + 2})$ ways.

All in all, we have established that
\[
    \sum_{\bi_1, \bi_2, \bi_3, \bi_4 \in [n]^k} \E \prod_{\ell = 1}^4  (x_{\bi_\ell}^{\epsilon} y_{\bi_\ell}^{\epsilon} - \E x_{\bi_\ell}^{\epsilon} \E y_{\bi_\ell}^{\epsilon}) = O(n^{2k + 3}),
\]
which, in view of \eqref{eq:fourth-moment}, completes the proof of part (a).

We now turn to part (b). In Cases 1 and 3, and in the $h_1 \ne h_2$ sub-case of Case 2, we have already established $O(n^{2k + 3 - \delta})$ bounds, for some $\delta > 0$, under Assumption~\ref{assm:entries-sms}, which therefore continue to hold under the special case of Assumption~\ref{assm:entries}. In the $h_1 = h_2$ sub-case of Case 2, for which we only have an $O(n^{2k + 3})$ bound under Assumption~\ref{assm:entries-sms}, we can do much better under Assumption~\ref{assm:entries}. Indeed, when $h_1 = h_2$ and Assumption~\ref{assm:entries} is true, $x_{\bi_{h_1}}^{\epsilon}$ is independent of $\{x_{\bi_\ell}^{\epsilon} : \ell \ne h_1\}$, and $y_{\bi_{h_1}}^{\epsilon}$ is independent of $\{y_{\bi_\ell}^{\epsilon} : \ell \ne h_1\}$. Also, since the $x_{\alpha}$'s and the $y_{\beta}$'s are independent, we can write
\[
    \E\prod_{\ell = 1}^4 (x_{\bi_\ell}^{\epsilon} y_{\bi_\ell}^{\epsilon} - \E x_{\bi_\ell}^{\epsilon} \E y_{\bi_\ell}^{\epsilon}) = \underbrace{\E (x_{\bi_{h_1}}^{\epsilon} y_{\bi_{h_1}}^{\epsilon} - \E x_{\bi_{h_1}}^{\epsilon} \E y_{\bi_{h_1}}^{\epsilon})}_{=0} \E\prod_{\ell \ne h_1}^4 (x_{\bi_\ell}^{\epsilon} y_{\bi_\ell}^{\epsilon} - \E x_{\bi_\ell}^{\epsilon} \E y_{\bi_\ell}^{\epsilon}) = 0.
\]
Thus the contribution of the relevant sub-sum is exactly $0$. This leads us to an overall $O(n^{2k + 3 - \delta})$ bound on $\sum_{\bi_1, \bi_2, \bi_3, \bi_4 \in [n]^k} \E \prod_{\ell = 1}^4  (x_{\bi_\ell}^{\epsilon} y_{\bi_\ell}^{\epsilon} - \E x_{\bi_\ell}^{\epsilon} \E y_{\bi_\ell}^{\epsilon})$ under Assumption~\ref{assm:entries}, and completes the proof of part (b).
\end{proof}

\begin{proof}[Proof of Theorem~\ref{thm:asconv}]
As we have discussed earlier, in-probability (resp. almost sure) convergence of $\frac{1}{n}\tr\, Q(n^{-1/2} M_n, \allowbreak n^{-1/2} M_n^*)$ under Assumption~\ref{assm:entries-sms} (resp. Assumption~\ref{assm:entries}) to $\state(Q(c, c^*))$, where $Q \in \C\langle X, X^* \rangle$, follows from Lemma~\ref{lem:fourth_moment_bound}.

Further, as the collection of monomials is countable, after one shows almost sure convergence for monomials, one can club all the relevant null sets into a single null set outside which pointwise convergence holds for monomials and hence for all polynomials. This proves that almost surely, $n^{-1/2}M_n$ converges in $*$-distribution, as an element of the $*$-probability space $(\mathcal{M}_n(\C), \frac{1}{n}\tr)$, to a circular variable under Assumption~\ref{assm:entries}.
\end{proof}

From Theorem~\ref{thm:asconv}-(a), we can obtain as a direct corollary one of the main results of \cite{bose2014bulk}, namely the almost sure weak convergence of the ESM of the $n^{-1/2}$-scaled Schur-Hadamard product of symmetric Toeplitz and Hankel matrices to the semi-circular law, under Assumption~\ref{assm:entries}.
\begin{cor}\label{cor:bm14}
Suppose that Assumption~\ref{assm:entries} holds. If $L_X(i, j) = |i - j|$ and $L_Y(i, j) = i + j$, then the ESM of $n^{-1/2}M_n$ converges weakly almost surely to the semi-circular law.
\end{cor}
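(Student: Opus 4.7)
The plan is to adapt the proof of Theorem~\ref{thm:toephank} to the symmetric setting, exploiting the observation that symmetric Toeplitz constraints arise as unions over sign choices of the non-symmetric ones. Since both $L_X(i,j) = |i-j|$ and $L_Y(i,j) = i+j$ are symmetric in $(i,j)$, the matrices $X_n$ and $Y_n$ are self-adjoint, and hence so is $M_n = X_n \odot Y_n$. Therefore the $*$-moments of $n^{-1/2}M_n$ collapse to ordinary moments, and it suffices to show that $\state_n((n^{-1/2}M_n)^{2k}) \to C_k$ and that the odd moments vanish. I would then repeat the expansion leading to \eqref{eq:expansionfinal} with every $\epsilon_i = 1$. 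Since $\Delta_{L_X} = 2$ for the symmetric Toeplitz link ($|i-j|=t$ has at most the two solutions $j = i \pm t$ for fixed $i$), Assumption~\ref{assm:propB} is verified and Lemmas~\ref{lem:part} and \ref{lem:nonpair} apply verbatim, eliminating odd moments and restricting the even ones to sums over pair partitions $(\pi, \pi') \in \P_2(2k) \times \P_2(2k)$.

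The crux of the argument is the identity
\[
    [\pi]_{L_X} = \bigcup_{\epsilon \in \{1, *\}^{2k}} [\pi]_{\widetilde{L}_X,\, \epsilon},
\]
where $\widetilde{L}_X(i,j) = i-j$ is the non-symmetric Toeplitz link of Theorem~\ref{thm:toephank}; this holds because $|i_u - i_{u+1}| = |i_v - i_{v+1}|$ iff there exist $\epsilon_u, \epsilon_v \in \{\pm 1\}$ with $\epsilon_u(i_u - i_{u+1}) = \epsilon_v(i_v - i_{v+1})$ (identifying $\{1,*\}$ with $\{\pm 1\}$). I would then combine this identity with the non-symmetric estimates case by case. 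For $\pi \ne \pi'$, Lemma~\ref{lem:compatibility} yields $\#([\pi]_{\widetilde{L}_X, \epsilon} \cap [\pi']_{L_Y}) = o(n^{1+k})$ for every $\epsilon$, and a union bound over the $2^{2k}$ choices of $\epsilon$ gives the same bound for $\#([\pi]_{L_X} \cap [\pi']_{L_Y})$. For $\pi = \pi' \in \P_2(2k) \setminus NC_2(2k)$, the crossing case of Lemma~\ref{lem:suff} combined with the same union bound again gives $o(n^{1+k})$. For $\pi = \pi' \in NC_2(2k)$, every $\epsilon$ with $\epsilon_r \ne \epsilon_s$ on each block $\{r,s\}$ cuts out the \emph{same} constraint set, namely $\{\bi : i_u = i_{v+1},\ i_{u+1} = i_v \text{ for each } \{u,v\} \in \pi\}$, of cardinality $n^{1+k}$; all other $\epsilon$'s, having $\epsilon_r = \epsilon_s$ on some block, are killed by the iterative good-block argument of Lemma~\ref{lem:suff} and contribute only $o(n^{1+k})$.

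Assembling these estimates yields $\#([\pi]_{L_X} \cap [\pi]_{L_Y})/n^{1+k} \to \mathbf{1}_{\pi \in NC_2(2k)}$, whence $\state_n((n^{-1/2}M_n)^{2k}) \to \#NC_2(2k) = C_k$, which are the moments of a standard semi-circular variable. The main delicate point is confirming that the union in the key identity is genuinely dominated by the single ``all-mismatched'' set---that the lower-order overlaps between different $\epsilon$-sets and the residual case-(a) contributions are all absorbed into the $o(n^{1+k})$ error terms---but this is handled cleanly by the good-block killing argument already established in Lemma~\ref{lem:suff}.
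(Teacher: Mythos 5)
Your argument is essentially correct, but it takes a much longer route than the paper, which disposes of the corollary in two lines: if $M_n$ is the \emph{non-symmetric} Toeplitz--Hankel product of Theorem~\ref{thm:toephank}, then $\tfrac{1}{\sqrt 2}(M_n+M_n^*)$ has $(i,j)$ entry $\tfrac{1}{\sqrt 2}(x_{i-j}+x_{j-i})\,y_{i+j}$, i.e.\ it \emph{is} a symmetric Toeplitz $\odot$ Hankel product whose entries again satisfy Assumption~\ref{assm:entries}; since $n^{-1/2}M_n\to c$ in $*$-distribution, $n^{-1/2}\tfrac{1}{\sqrt 2}(M_n+M_n^*)\to \tfrac{1}{\sqrt 2}(c+c^*)$, which is semi-circular. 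You instead rerun the whole combinatorial machinery for the link $|i-j|$, decomposing each symmetric Toeplitz class $[\pi]_{L_X}$ into signed classes $[\pi]_{\widetilde L_X,\epsilon}$ and importing the estimates of Lemmas~\ref{lem:suff} and~\ref{lem:compatibility} term by term. This works --- crucially, the Hankel classes $[\pi']_{L_Y,\epsilon}$ do not depend on $\epsilon$ because $L_Y$ is symmetric, so the paper's lemmas (which fix a single $\epsilon$ for both links) really do apply to each term of your union --- and it has the side benefit of being manifestly universal in the entries, whereas the paper's reduction proves the statement directly only for entries of the special form $(x_t+x_{-t})/\sqrt 2$ and relies implicitly on the fact that the limiting moments depend only on the first two moments of the inputs. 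Two small imprecisions: your ``identity'' $[\pi]_{L_X}=\bigcup_\epsilon[\pi]_{\widetilde L_X,\epsilon}$ is really only the inclusion $\subseteq$ (a point of $[\pi]_{\widetilde L_X,\epsilon}$ may have strictly coarser absolute-value partition when two distinct signed blocks carry opposite differences), and the union is not disjoint; but the inclusion is all you use for the upper bounds, and for the non-crossing main term you correctly identify the single dominant constraint set of cardinality $n^{1+k}+O(n^k)$, so the conclusion stands.
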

\begin{proof}
    If $M_n$ is as in Theorem~\ref{thm:toephank}, then $\frac{M_n + M_n^*}{\sqrt{2}}$ is the Schur-Hadamard product of a symmetric Toeplitz matrix and a Hankel matrix, whose entries satisfy Assumption~\ref{assm:entries}, except that the diagonal entries of the symmetric Toeplitz matrix has variance $2$---but this does not matter since changing the diagonal entries does not affect the LSM (see, e.g., Theorem 2.5 of \cite{bai2010spectral}). Now, using Theorem~\ref{thm:asconv}, we conclude that almost surely for all $k \ge 1$,
    \begin{equation*}
        \frac{1}{n}\tr\bigg(\frac{M_n + M_n^*}{\sqrt{2n}}\bigg)^k \xrightarrow{n \rightarrow \infty} \state\bigg(\frac{c + c^*}{\sqrt{2}}\bigg)^k.
    \end{equation*}
   
    This implies the desired result by moment method because $\frac{c + c^*}{\sqrt{2}}$ is a semi-circular variable if $c$ is circular.
\end{proof}

\section*{Acknowledgements}
We thank the anonymous referees for their many helpful comments and suggestions that significantly improved the paper.

\bibliographystyle{abbrv}
\bibliography{circ-hadamard-arxiv}

\end{document}